\newtheorem{thm}{\bf Theorem}[section]
\newtheorem{eg}[thm]{\bf Example}
\newtheorem{prop}[thm]{\bf Proposition}
\newtheorem{mydef}[thm]{\bf Definition}
\newtheorem{lem}[thm]{\bf Lemma}
\newtheorem{theorem}{\bf Theorem}
\newtheorem{theoremrepeat}{\bf Theorem}
\theoremstyle{remark}
\newtheorem{rem}[thm]{\bf Remark}
\newcommand{\CC}{\mathbb{C}}
\newcommand{\ZZ}{\mathbb{Z}}
\newcommand{\U}{\"u}
\newcommand{\flag}{\Fl(n;\mathbf{r})}
\newcommand{\QH}{\mathrm{QH}}
\newcommand{\HH}{\mathrm{H}}
\newcommand{\Sn}{S(n;\mathbf{r})}
\newcommand{\rr}{\mathbf{r}}
\newcommand{\ev}{\mathrm{ev}}   
\newcommand{\qq}{\mathbf{q}}   
\newcommand{\dd}{\mathbf{d}}   
\newcommand{\Sch}{\mathfrak{S}} 
\DeclareMathOperator{\sgn}{sgn}
\DeclareMathOperator{\Fl}{\mathrm{Fl}}
\DeclareMathOperator{\Gr}{\mathrm{Gr}}
\DeclareMathOperator{\rk}{\mathrm{rank}}
\renewcommand{\emptyset}{\varnothing}
\newcolumntype{C}[1]{>{\centering\let\newline\\\arraybackslash\hspace{0pt}}m{#1}}
\title[]{Quantum hooks and mirror symmetry for flag varieties}
\author[L.~Chen]{L.~Chen}
\address{Linda Chen \newline \indent Department of Mathematics and Statistics, Swarthmore College, Swarthmore, PA 19081}
\email{lchen@swarthmore.edu}
\author[E.~Kalashnikov]{E.~Kalashnikov}
\address{Elana Kalashnikov \newline \indent  Department of Pure Mathematics,University of Waterloo,
Waterloo, Canada N2L 3G1}
\email{e2kalash@uwaterloo.ca}
\thanks{LC was partially supported by Simons Collaboration Grant 524354 and NSF Grant DMS-2101861. EK is supported by an NSERC Discovery Grant.}
\begin{document}
\begin{abstract}
Given a flag variety $\Fl(n;r_1, \dots , r_\rho)$, there is natural ring morphism from the symmetric polynomial ring in $r_1$ variables to the quantum cohomology of the flag variety. 
In this paper, we show that for a large class of partitions $\lambda$, the image of $s_\lambda$ under the ring homomorphism is a Schubert class which is described by partitioning $\lambda$ into a quantum hook (or $q$-hook) and a tuple of smaller partitions.  We use this result to show that the Pl\U cker coordinate mirror of the flag variety describes quantum cohomology relations. This gives new insight into the structure of this superpotential, and the relation between superpotentials of flag varieties and those of Grassmannians (where the superpotential was introduced by Marsh--Rietsch). 
\end{abstract}

\maketitle

\section{Introduction}\label{sec:intro}
The extension of mirror symmetry for Fano varieties beyond the toric context, where it is well-understood due to foundational work by Hori--Vafa \cite{horivafa}, Givental \cite{Givental98}, Lian--Liu--Yau \cite{lian} and others, is an area of active research. Grassmannians and flag varieties are central examples here, as Fano GIT quotients with a rich geometric and combinatorial structure.  

One of the oldest proposals for a mirror, or superpotential, for the Grassmannian $\Gr(n,r)$ was given by Eguchi--Hori--Xiong \cite{eguchi}. This was later generalized to type A flag varieties by  Batyrev--Ciocan-Fontanine--Kim--van Straten \cite{flagdegenerations} (for simplicity, we refer to these mirrors as EHX mirrors).  These proposals are motivated by taking toric degenerations of Grassmannians and flag varieties, and then applying toric methods to the singular fiber. However, the toric degeneration approach has not been successful in proving required properties of these mirrors -- partial verification was completed for Grassmannian and flag varieties by Rietsch in \cite{lietheoretic} using the Lie theoretic superpotential, and a full verification for Grassmannians by Marsh--Rietsch \cite{MarshRietsch} using the Pl\U cker coordinate mirror. 

The Pl\U cker coordinate mirror for the Grassmannian is the most promising approach to mirror symmetry beyond the toric context. This remarkable construction connects the earlier, Lie theoretic proposals of the Grassmannian with the conjectures of the Fanosearch program and the toric degeneration approach.  Extending the construction beyond the Grassmannian is thus an important problem.  In \cite{kflags}, the second author introduces a conjectural Pl\U cker coordinate mirror for type A flag varieties (see \cite{spacek, spacekwang} for recent progress on the subject in other types). As a first test of its validity, the second author proves in \cite{kflags} that the Pl\U cker coordinate mirror is compatible with the EHX mirror. More is required, however: a superpotential or mirror should compute quantum information about the variety -- through determining both quantum relations as well as certain genus 0 Gromov--Witten invariants. 

In this paper, we prove a theorem in this direction. We show that partial derivatives of the Pl\U cker coordinate mirror of a type A flag variety give quantum cohomology relations.  To state the result carefully, we need some more background. 

The Pl\U cker coordinate mirror of the Grassmannian is a rational function on the Grassmannian. As for toric varieties, there is a map from the Cox ring of the Grassmannian (the ring generated by Pl\U cker coordinates) to the cohomology ring of the Grassmannian.  Pl\U cker coordinates of the Grassmannian of quotients $\Gr(n,r)$ are indexed by the same set as Schubert classes of the Grassmannian -- i.e. by partitions fitting into an $r \times (n-r)$ box -- and this map takes the Pl\U cker coordinate $p_\lambda$ to the Schubert class $s_\lambda$. Under this map, partial derivatives of the Pl\U cker coordinate mirror give quantum cohomology relations \cite{MarshRietsch}. 

For $n=:r_0$ and $\mathbf{r}=(r_1>\cdots>r_\rho>r_{\rho+1}:=0)$, let $\flag:=\Fl(n;r_1,\dots,r_\rho)$ be the partial flag variety of successive quotients of $\CC^n$ of dimension $r_i$. The Pl\U cker coordinate mirror of this flag variety proposed in \cite{kflags}  is a rational function on a product of Grassmannians $Y=\prod_{i=1}^\rho \Gr(r_{i-1},r_i)$, with the convention $r_0:=n$. Using the cluster structures of the Grassmannian factors, this superpotential can be written as a Laurent polynomial in certain Pl\U cker coordinates of each factor. We index Pl\U cker coordinates on $Y$ by $p^i_\lambda$, where $i=1,\dots,\rho$ and $\lambda$ is a partition that fits into an $r_i \times (r_{i-1}-r_i)$ box.  

 Schubert classes of the Grassmannian are indexed by partitions, and Schubert classes $\sigma_{\vec{\lambda}}$ in a flag variety are indexed by  \emph{tuples} ${\vec{\lambda}}=(\lambda_1,\dots,\lambda_n)$ of partitions.  To interpret partial derivatives of the Pl\U cker coordinate mirror requires a map from the Cox ring of $Y$ to the cohomology of the  flag variety. This is not the natural map given by 
 \[p^i_\lambda \mapsto s_{\vec{\mu}},\]
 where $\vec{\mu}_j$ is $\lambda$ if $i=j$ and $\emptyset$ otherwise. Instead, we require the \emph{Schubert map}  $F$ (see definition \ref{def:Schubertmap}).  Our main result is then the following. 

\begin{theorem}  \label{thm:thmA} Let $W_P$ be the Pl\U cker coordinate mirror of a flag variety, and $W_{P,C}$ the expression of $W_P$ in any choice of cluster charts.  Then
\[F\left(\frac{\partial}{\partial p^i_{\lambda}} W_{P,C}\right)=0\]
in quantum cohomology, for any $i=1,\dots,\rho$ and $p^i_\lambda$ in the cluster chart. 
\end{theorem}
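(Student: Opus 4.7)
The plan is to leverage the main technical result of the paper---the quantum hook description of the image $F(s_\lambda)$---together with the Marsh--Rietsch theorem for the single Grassmannian. The Pl\U cker coordinate mirror lives on $Y = \prod_{i=1}^\rho \Gr(r_{i-1}, r_i)$, and in a cluster chart $C$ the superpotential becomes a Laurent polynomial in the $p^i_\lambda$'s. A crucial structural feature is that each Pl\U cker coordinate $p^i_\lambda$ appears in only a small number of the Laurent monomials comprising $W_{P,C}$, so $\partial/\partial p^i_\lambda$ isolates a finite local contribution that couples Pl\U cker coordinates on the $i$-th factor with those on the $(i-1)$-th and $(i+1)$-th factors.

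First, I would write out $W_{P,C}$ explicitly using the construction of \cite{kflags} in a convenient family of cluster charts. The expected shape is a decomposition $W_P = \sum_i W_P^{(i)}$ in which $W_P^{(i)}$ resembles the Marsh--Rietsch superpotential on $\Gr(r_{i-1},r_i)$, except that the role played by frozen variables in the Grassmannian case is now played by Pl\U cker coordinates on the neighboring Grassmannian factors. Differentiating with respect to $p^i_\lambda$ then yields an expression that, aside from the coupling coordinates $p^{i\pm 1}_\nu$, is formally a Marsh--Rietsch quantum Schubert relation localized on the $i$-th factor.

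The next step is to apply $F$ to this derivative and use the quantum hook theorem to rewrite each term. A product of the form $F(p^i_\lambda \cdot p^{i\pm 1}_\nu)$ should be, by the main theorem, a single Schubert class $\sigma_{\vec{\kappa}}$ in $\QH(\flag)$ whose tuple $\vec{\kappa}$ is obtained by assembling $\lambda$ and $\nu$ through a $q$-hook decomposition. After these rewritings, $F(\partial W_{P,C}/\partial p^i_\lambda)$ should collapse into a linear combination of Schubert classes that factor by factor matches the image under $F$ of a Marsh--Rietsch relation on $\Gr(r_{i-1},r_i)$, multiplied by Schubert classes coming from the other Grassmannian factors. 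The Marsh--Rietsch theorem applied on each factor then forces the whole sum to vanish in $\QH(\flag)$.

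The principal obstacle will be ensuring that every partition shape arising in the partial derivatives lies in the ``large class'' of partitions for which the quantum hook theorem applies, and that the combinatorial rewriting produces \emph{exactly} the Marsh--Rietsch relation rather than a deformation with extra correction terms. A secondary issue is independence of the cluster chart: I would either check invariance directly via the exchange relations for cluster mutation, or invoke compatibility of $W_P$ with mutations from \cite{kflags} to reduce to a canonical chart. Matching powers of the quantum parameters $q_i$ produced by the quantum hook formula against those arising from the Marsh--Rietsch relation is likely to be the most delicate bookkeeping in the argument.
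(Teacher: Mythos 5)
Your outer scaffolding matches the paper in two places: reducing chart-independence to a single chart via the three-term exchange relation and the chain rule is exactly Proposition \ref{pro:reduction}, and Theorem \ref{thm:thmB} plus Marsh--Rietsch are indeed the two pillars. But the core of your argument reduces to the wrong Grassmannian, and this is a genuine gap. You propose to isolate from $\partial W_{P,C}/\partial p^i_\lambda$ a Marsh--Rietsch relation ``localized on the $i$-th factor'' $\Gr(r_{i-1},r_i)$ and conclude by applying Marsh--Rietsch factor by factor. That theorem produces relations in $\QH^*\Gr(r_{i-1},r_i)$, and there is no ring homomorphism carrying them into $\QH^*\flag$: the quantum products coupling the factors inside $\QH^*\flag$ are precisely the obstruction, and the $\Fl(4;2,1)$ example opening Section \ref{sec:theoremA} shows that both natural factor-wise interpretations of the $\partial/\partial p^2$ equation (as $s$-classes or as Schubert classes) are simply false relations. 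The paper's mechanism is global, not factor-wise: in the rectangles chart the mirror is the EHX superpotential on the dual ladder quiver (Theorem \ref{thm:flagMR}), the flag's quiver embeds into that of one large Grassmannian $\Gr(N,r_1)$ with $N\gg 0$, and Proposition \ref{pro:commutes} --- itself a corollary of Theorem \ref{thm:thmB}(a) --- shows the Schubert map intertwines the two vertex labelings, $F_{\Fl}\circ\phi_{\Fl}=\pi\circ F_{\Gr}\circ\phi_{\Gr}$, where $\pi:\Lambda_{r_1}^\circ\to\widetilde{QH}^*(\flag)$, $s_\lambda\mapsto s^1_\lambda$, is an honest ring homomorphism. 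Marsh--Rietsch is then applied \emph{once}, to the big Grassmannian, and pushed through $\pi$; the extra boundary arrows present only in the big quiver contribute terms that vanish by Theorem \ref{thm:thmB}(b) --- a step with no analogue in your plan, and one you would be forced to rediscover.

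A second concrete error is the rewriting step: you assert that $F(p^i_\lambda\cdot p^{i\pm 1}_\nu)$ is, ``by the main theorem, a single Schubert class.'' Theorem \ref{thm:thmB} evaluates $s^1_\lambda$, the image of a \emph{single} Schur polynomial under $\Lambda_{r_1}\to\QH^*\flag$; it says nothing about quantum products of Schubert classes indexed on different factors, and such products are not single Schubert classes in general (their expansions involve genuine Gromov--Witten invariants). In the paper, Theorem B enters only to evaluate $s^1$ of the rectangular partitions $\lambda_a,\lambda_b$ labeling vertices of the big Grassmannian quiver, giving $s^1_{\lambda_a}=q^{H}\sigma_{\vec{\mu}_a}$ and hence the commuting square, and to kill the extra terms via the vanishing statement. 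Finally, even granting chart reduction, the derivative of $W_C$ at $p^i_{j\times k}$ is a signed eight-arrow sum at two vertices; the paper needs the telescoping argument along the diagonal chain of maximal rectangles to reduce this to per-vertex four-arrow equations of Marsh--Rietsch shape, a reduction your outline omits and which is needed even in the pure Grassmannian case.
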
 

This result represents a significant step towards a full verification of the Pl\U cker coordinate mirror of flag varieties. If similar structure holds beyond the type A case, this result may also be important in extending candidate mirrors from cominiscule varieties to any homogeneous space.  It elucidates the increased complexity from the Grassmannian case. It also demonstrates a previously unobserved structure relating the mirrors of Grassmannians and flag varieties: although not at all obvious from the description of the Schubert map, we show that it precisely interpolates between the Pl\U cker coordinate mirror of the flag variety $\flag$ and containing Grassmannians $\Gr(N,r_1), N>>0.$ It is this property of the Schubert map which is key to the proof of Theorem A, as it essentially allows us to reduce to the Grassmannian case. 
This interpolation result is a corollary of Theorem B below, a purely quantum cohomology statement. 

Following the approach of \cite{fgp} and \cite{cf}, we use a ``quantization" approach for the quantum cohomology ring of the flag variety. This and other descriptions will be reviewed in Sections \ref{sec:background} and \ref{sec:background2}. There is a natural ring homomorphism from  the ring $\Lambda_{r_1}$ of symmetric polynomials in $r_1$ variables to $\QH^*\flag$ defined sending elementary symmetric polynomials to certain quantum elementary polynomials. We write $s^1_\lambda \in \QH^*\flag$ for the image of a Schur polynomial $s_\lambda$ (see \eqref{eq:si} for more details). The first part of Theorem B states that for certain partitions $\lambda$, the image is  a  Schubert class (up to multiplication by quantum parameters), and the second part of Theorem B states that for another class of  partitions, the image is zero.

For $0<b\leq n$, let $0\leq I\leq \rho$ be such that $n-r_I<b\leq n-r_{I+1}$. In \S \ref{sec:theoremB},  we define the \emph{quantum hook} or \emph{$q$-hook} of width $b$ to be the partition  $H_b := (b^{b-n+r_1},(b-n+r_I)^{n-r_{I+1}-b})$, and  set $R_b:=(b^{b-n+r_1})$ to be the maximal width rectangle contained in $H_b$,  with  $H_b=R_b=\emptyset$ if $b<n-r_1$. Set $$q^{H_{b}}:=  q_1^{r_1-r_2}\cdots(q_1\cdots q_{I-1})^{r_{I-1}-r_I}(q_1\cdots q_I)^{b-(n-r_I)}.$$  
For a partition $\lambda$ that contains the $q$-hook of  width equal to the width of $\lambda$,  we associate a tuple of partitions $\vec{\mu}=(\mu^1,\ldots,\mu^{I+1},\emptyset,\ldots,\emptyset)$ by subdividing the skew shape $\lambda/H_{\lambda}$ as in Figure \ref{fig:mu}, where $\mu_i\in P(r_{i-1},r_i)$ is of width $r_{i-1}-r_i$. (See Definition \ref{def:mu-lambda} for more details.)

\begin{figure}[h!]
\centering
 
\begin{tikzpicture}[scale=.4]

\fill[color=gray!40]   (0,0) --(9, 0) -- (9,-5) -- (2, -5) -- (2, -7)-- (0, -7) -- cycle;

\draw[thick] (6, 0) -- (9, 0) -- (9,-5) -- (2, -5) -- (2, -7) -- (0, -7)--(0,-3) -- (0,0) -- (6,0);

\draw[thick] (0,-7) --  (0,-9.67) -- (1.33,-9.67)-- (1.33,-9) -- (2,-9)-- (3.33,-9) -- (3.33,-8)  -- (6.5,-8)--(6.5,-7)--(8.2,-7) --  (8.2,-6)--(9,-6) --(9,-5);

\draw (2,-6) -- (2,-9);
\draw (4.67,-5) -- (4.67,-8);
\node[scale=.8] at (5, -6.1) {$\cdots$};
\draw (5.3,-5) -- (5.3,-8);
\draw (7,-5) -- (7,-7);
\node[scale=.8]  at (1, -8) {$\mu^{I+1}$};

\node[scale=.8]  at (3.3,-6.1) {$\mu^I$};

\node[scale=.8] at (6.1,-6.1) {$\mu^2$};
\node[scale=.8] at (7.6,-6.1) {$\mu^1$};

\node[scale=.8] at (4.5,-3) {$H_{\lambda_1}$};

\end{tikzpicture}
\caption{A partition $\lambda$ containing $H_{\lambda_1}$,  the skew shape $\lambda/H_{\lambda_1}$, and the associated tuple of partitions $\vec{\mu}=(\mu^1,\ldots,\mu^{I+1},\emptyset,\ldots,\emptyset)$.  }
\label{fig:mu}

\end{figure}
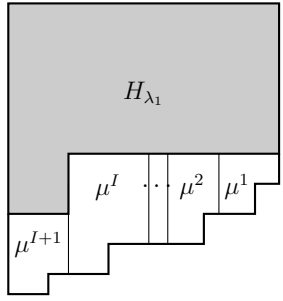

\begin{theorem}\label{thm:thmB} 
Let $\lambda\subseteq r_1\times n$ be a partition, and let $I$ be such that $n-r_I<\lambda_1\leq n-r_{I+1}$. 
\begin{enumerate}
\item[(a)]  If $H_{\lambda_1}\subseteq \lambda$, then
\[ s^1_\lambda = q^{H_{\lambda_1}}\sigma_{\vec{\mu}} \text{ in } \QH^*\flag,
\]
 where $\vec{\mu}=(\mu^1,\ldots,\mu^{I+1},\emptyset,\ldots,\emptyset)$ is the tuple of partitions associated to $\lambda$ above.
 \item[(b)] If $\lambda$ contains $R_{\lambda_1}$, but $H_{\lambda_1}\not\subseteq \lambda$, then  
$$s^1_\lambda =0 \text{ in } \QH^*\flag.$$
 \end{enumerate}
  In particular,  $s^1_{H_{\lambda_1}} = q^{H_{\lambda_1}}$ since  $H_{\lambda_1}/H_{\lambda_1}=\emptyset$, so  $\mu^j=\emptyset$ for all $j$ and $\sigma_{(\emptyset,\ldots,\emptyset)} = 1$.
  \addtocounter{theorem}{-1}
\end{theorem}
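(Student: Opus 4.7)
My plan is to proceed by induction on $I$ using the FGP/CF quantization of $\QH^*\flag$. In this framework the ring is presented as a quotient of a polynomial ring (in tautological Chern classes at each step) by quantum Whitney relations, and the map $\Lambda_{r_1}\to\QH^*\flag$ is the natural one sending elementary symmetric polynomials to Chern classes of the first tautological quotient bundle. The image $s^1_\lambda$ is then a ``quantum Schur polynomial'' in the first $r_1$ variables, which I would expand via a quantum Jacobi--Trudi style identity in order to reduce arbitrary partitions to row or column shapes where the Whitney relations can be applied directly.

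The base case $I=0$ is essentially classical: if $\lambda_1\leq n-r_1$ then $\lambda\subseteq r_1\times(n-r_1)$, the quantum hook $H_{\lambda_1}$ is empty, $\vec{\mu}=(\lambda,\emptyset,\ldots,\emptyset)$, and $s^1_\lambda$ coincides with the pullback of the Grassmannian Schubert class $s_\lambda$ under the projection $\flag\to\Gr(n,r_1)$, giving $\sigma_{(\lambda,\emptyset,\ldots,\emptyset)}$. For the inductive step I would establish a \emph{peeling lemma}: when $\lambda_1\in(n-r_I,n-r_{I+1}]$, the quantum Whitney relation at the $I$-th step of the flag allows one to extract a factor of $q_1\cdots q_I$ while removing a rim hook of size $n$ from the top of $\lambda$ in a controlled way. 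Iterating this peeling $\lambda_1 - n + r_i$ times at each level $i=1,\ldots,I$ removes exactly the shape $H_{\lambda_1}$ from $\lambda$ and accumulates the monomial $q^{H_{\lambda_1}}$, whose exponent of $q_i$ is precisely $\lambda_1-n+r_i$; the remaining skew shape $\lambda/H_{\lambda_1}$ then decomposes, as in Figure~\ref{fig:mu}, into the tuple $\vec{\mu}$, yielding $\sigma_{\vec{\mu}}$. Part (b) follows from the same peeling machinery: if $R_{\lambda_1}\subseteq\lambda$ but $H_{\lambda_1}\not\subseteq\lambda$, peeling $R_{\lambda_1}$ produces a partition whose column heights exceed the bound allowed in the $(I+1)$-st sub-rectangle of the flag decomposition, forcing the class to vanish.

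The main obstacle will be the precise combinatorial accounting in the peeling lemma: one must check that iterating the quantum Whitney relations yields a single term $q^{H_{\lambda_1}}\sigma_{\vec{\mu}}$, with exactly the claimed $\vec{\mu}$ and the claimed monomial, rather than a sum of such terms. This is the flag-variety analogue of the Bertram--Ciocan-Fontanine--Fulton rim-hook rule for Grassmannians, so the Grassmannian case will serve as both a guide and a sanity check. The key combinatorial input is matching the height of $H_{\lambda_1}$ truncated at the $i$-th level against the exponent of $q_i$ produced by the $i$-th Whitney relation, and showing that quantum corrections from the $I$-th step do not contaminate the components $\mu^j$ for $j>I+1$. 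I expect this bookkeeping, together with carefully separating the inductive contributions coming from different steps of the flag, to be the most technical part of the argument.
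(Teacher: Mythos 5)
Your framework (the quantization presentation of \cite{fgp,cf}, the quantum Whitney relations \eqref{eq:eq-recursion}, quantum Jacobi--Trudi determinants) is the same one the paper uses, your base case is correct, and your $q$-exponent bookkeeping checks out: the exponent of $q_i$ in $q^{H_{\lambda_1}}$ is indeed $\lambda_1-n+r_i$. But the ``peeling lemma'' that carries the entire content of the theorem is asserted rather than proved, and as stated it has three concrete problems. First, a degree count shows that whatever is removed alongside a factor of $q_1\cdots q_I$ must consist of $\deg(q_1\cdots q_I)=n+r_1-r_I-r_{I+1}$ boxes, not $n$; rim hooks of size $n$ are special to the Grassmannian case ($I=\rho$, $r_1=r_I$, $r_{I+1}=0$), so your peeling step is mis-calibrated for every genuinely multi-step flag. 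Second --- and this is the essential missing idea --- applying the relation \eqref{eq:eq-recursion} to a single entry $e^q_a(r_{\ell-1})$ produces not one term but a sum involving all the corrections $\sigma^\ell_m\, e^q_{a-m}(r_\ell)$, so term-by-term peeling yields a large sum, and you supply no mechanism for the cancellation you yourself flag as the main obstacle. The paper's mechanism is determinantal (Proposition \ref{prop:expand}): after Laplace expansion of $s^1_\lambda=\det\bigl(e^q_{\lambda'_i+j-i}(r_1)\bigr)$ along the first column --- introducing the auxiliary partitions $\lambda^{(m)}$ of Definition \ref{def:qlambda}, which is also why the paper inducts on the width $\lambda_1$ rather than on $I$; induction on $I$ alone does not close, since for fixed $I$ the width still varies and the peeling needs an inner induction --- one reorders columns and observes that the transition matrix $A+D$ between the two column systems is lower triangular with all the $\sigma^\ell_m$'s strictly below the diagonal, so every correction term dies inside the determinant and only $\prod_\ell(\pm q_\ell)$ survives.

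Third, even granting a clean peeling, your final step (``yielding $\sigma_{\vec{\mu}}$'') is unjustified. What the peeling actually produces is a determinant in quantum elementary polynomials with \emph{mixed} flaggings $e^q_\bullet(r_{\phi_j})$, and identifying this with the honest Schubert class $\sigma_{\vec{\mu}}$ --- as opposed to the product class $s_{\vec{\mu}}$ of the other multiplicative basis, which is where naive rim-hook calculus in the spirit of Bertram--Ciocan-Fontanine--Fulton and the Abelian/non-Abelian correspondence of \cite{gukalashnikov} naturally lands --- requires the Billey--Jockusch--Stanley theory of flagged skew Schur polynomials and 321-avoiding permutations (Lemma \ref{lem:bijections} together with \eqref{eq:q-Schubert}). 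Since the bases $\{\sigma_{\vec{\mu}}\}$ and $\{s_{\vec{\mu}}\}$ genuinely differ for every multi-step flag variety, this identification cannot be waved through. Your sketch of part (b) inherits the same gaps; in the paper the vanishing ultimately comes from block-triangularity of the skew determinant when $H_{\lambda_1}\not\subseteq\lambda$ (Remark \ref{rem:skewzero}), applied after the same determinantal identity of Proposition \ref{prop:expand}.
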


In \S \ref{sec:background}, we review the necessary background on quantum cohomology of Grassmannians and flag varieties, and in \S \ref{sec:background2}, we discuss the EHX and Pl\U cker coordinate mirror of the Grassmannian. In \S \ref{sec:theoremA}, we describe the Schubert map and prove Theorem A, and in \S \ref{sec:theoremB}, we study  $q$-hooks and prove Theorem B. 
\subsection*{Acknowledgements}
The authors would like to thank Konstanze Rietsch, Dave Anderson, and Jennifer Morse for helpful conversations. 
\section{Quantum cohomology of flag varieties}\label{sec:background}
\subsection{Permutations and Schubert classes}
Fix an $n$-dimensional vector space $V$ and a tuple of integers $\mathbf{r}= (n>r_1>\cdots > r_\rho>0)$ and let 
$\flag= \Fl(n;r_1,\dots,r_\rho) $ denote the partial flag variety parametrizing successive quotient flags of $V$ of dimensions $r_i$. It comes equipped with a tautological sequence of quotient bundles 
$V_{\flag} \twoheadrightarrow Q_1 \twoheadrightarrow \cdots \twoheadrightarrow Q_\rho$ 
of ranks $r_1,\dots,r_\rho$. 

The basis of Schubert classes for $\flag$ consists of geometrically described cohomology classes that are commonly indexed by
\[ S(n;\mathbf{r}):= \{w\in S_n: w(i)<w(i+1) \text{ if } i\not\in\mathbf{r}\},\]
 the set of permutations in $S_n$ whose descent set is contained in $\{r_1,\ldots,r_\rho\}$.
If 
$S_{n,\mathbf{r}}$ is the  parabolic subgroup of $S_n$ generated by simple transpositions $(i,I+1)$ for $i\not\in\rr$, then $\Sn$ is a set of coset representations for $S_n/S_{n,\rr}$.

For a permutation $w\in S(n,\mathbf{r})$, define $r_w(p,q) = \#\{i \leq p \,|\, w(i)\leq q\}.$ 
This is the rank of the upper-left $p\times q$ submatrix of the permutation matrix corresponding to $w$ (which has $1$'s in positions $(i,w(i))$ and $0$'s elsewhere).   The \emph{length} of $w$ is the number
$\ell(w) = \#\{ i<j \,|\, w(i)>w(j) \}$.

Let $E_\bullet$ be a flag of trivial vector bundles on $\flag$. Then the Schubert variety $\Omega_w = \Omega_w(E_\bullet) \subseteq \flag$ is defined by
\[
  \Omega_w = \{ x\in \flag) \,|\, \rk(E_q \to Q_p) \leq r_w(n_p,q) \text{ for all } 1\leq q\leq n, n_p\in\mathbf{r}\}.
\]
We write  $\sigma_w$  for the corresponding  Schubert class in $\HH^{2\ell(w)}\flag$.

The unique permutation of longest length in $\Sn$ is given explicitly by 
\[
  w^\circ = [ n-r_\rho+1,\ldots,n,\; \ldots, n-n_{1}+1, \ldots, n-r_2, \ldots, 1,2,\ldots,n-r_1].
\]
Its length is $\ell(w^\circ) = \dim\flag$. There is an involution on $\Sn$, using the longest element $w_\circ^{\mathbf{r}}$ of $S_{n,\mathbf{r}}$:
\[
  w^\vee = w_\circ \cdot w \cdot w_\circ^\rr.
\]
This is an element of $\Sn$, with $\ell(w^\vee) = \dim\flag - \ell(w)$.  The classes $\sigma_{w^\vee}$ form a Poincar\'e dual basis: $\int_{\flag} \sigma_w \cup \sigma_{v^\vee}  = \delta_{w,v}$.

\subsection{Another basis and tuples of partitions}

We describe another basis for $\HH^*\flag$ in terms of tuples of partitions. Consider the set
\[P(n,\mathbf{r}):=\prod_{i=1}^\rho P(r_{i-1},r_{i}),\]
where we  set $r_0:=n$ and $r_{\rho+1}:=0$, and where
  $P(a,b)$ denotes the partitions inside  a $b \times (a-b)$ rectangle.

\begin{rem} \label{rem:bijection} There is a bijection between permutations in $S(n,\mathbf{r})$ and tuples of partitions in $P(n,\mathbf{r})$. Given a tuple $\vec{\mu}=(\mu^1,\ldots,\mu^\rho)\in P(n,\mathbf{r})$, for $1\leq i\leq \rho$, denote by $w^i$  the Grassmannian permutation in $S_n$ with possible descent at $r_i$ defined by the partition $\mu^i\subseteq P(  r_{i-1},r_i)\subseteq P(n,r_i)$, i.e. $\mu^i = (w^i(r_i)-r_i,\ldots w^i(1)-1)$, so that $w^i = w_{(\emptyset,\ldots, \mu^i, \ldots,\emptyset)}$  Then the  tuple $\vec{\mu} \in P(n,\mathbf{r})$ corresponds to the permutation
\[ w_{\vec{\mu}} := w_{(\mu^1,\emptyset,\ldots)} \cdots w_{(\emptyset,\ldots,\mu^\rho)} = w^1w^2\cdots w^\rho.\]
On the other hand, given $w\in S(n,\mathbf{r})$, we can produce a tuple $\vec{\mu}=(\mu^1,\ldots,\mu^\rho)$. 
(See  also \cite{WiAG}).
\end{rem}

 If  a tuple of partitions $\vec{\mu}=(\mu^1,\ldots,\mu^\rho)\in P(n,\mathbf{r})$ corresponds to the permutation $w$ under the bijection in Remark \ref{rem:bijection}, we also write the Schubert class $\sigma_w$ as $\sigma_{\vec{\mu}}$.

\begin{eg} \label{eg:permutation} Consider the flag variety $\Fl(8;6,4,3)$ with $n=8$ and $\mathbf{r}=(6,4,3)$. For the  tuple  $\left(\yng(2,1),\yng(1,1),\emptyset\right)$ in $P(n,\mathbf{r})$,  $w^1=[123457|68]\, w^2=[1245|3678], 
w^3=id$ with descents marked at $r_1=6$ and $r_2=4$. The corresponding permutation in $S(n,\mathbf{r})$ is $w=w^1w^2w^3=[1245|37|68]$. 
Similarly, $\left( \yng(2,1),\yng(2,2,1),\yng(1,1,1)\right)$ corresponds to the permutation
   $ [123468|57]\cdot [1356|2478]\cdot
[234|15678]=[368|1|24|57]$, and  $\left(\yng(1),\yng(2,1),\yng(1,1)\right)$ corresponds to 
$[123457|68]\cdot [1246|3578]\cdot[134|25678]=[147|2|35|68]$.

\end{eg}
 For a partition $\lambda\in P(r_{i-1},r_i)$, we define the class $s^i_\lambda$ to be the Schur polynomial associated to the partition $\lambda$ in the Chern roots of $Q_i$, the rank $r_i$ tautological quotient bundle on $\flag$:
\begin{equation}
\label{eq:s-def}
s^i_\lambda=\det(s^i_{1^{\lambda'_k+l-k}}).
\end{equation}
Note that $s^i_{1^a}=c_a(Q_i)$ is the $a$th Chern class of the bundle $Q_i$ so that $s^i_{1^a}=e_a(r_i)$.  Via the bijection in Remark \ref{rem:bijection}, $s^i_\lambda$ is equal to the Schubert class 
$$s^i_\lambda= \sigma_{(\emptyset,\ldots,\lambda,\ldots,\emptyset)}$$ 
associated to the tuple of partitions consisting $i$th partition equal to $\lambda$ and the empty partition elsewhere.

Note that  we can use \eqref{eq:s-def} to define $s^i_\lambda$ even when $\lambda \not \in P(r_{i-1},r_i)$, although it is no longer  a Schubert class in general.

\begin{rem}
Given a tuple of partitions $\vec{\mu}=(\mu^1,\ldots,\mu^\rho)$, we obtain another important  class
$$s_{\vec{\mu}}:= s^1_{\mu_1} \cdots s^\rho_{\mu_\rho}.$$
Running over all $\vec{\mu}$ we obtain another basis for the cohomology of the flag variety.
 The two bases $\{\sigma_{\vec{\mu}}\}$ and $\{s_{\vec{\mu}}\}$ are distinct, except in the case of the Grassmannian. \end{rem}

\subsection{Quantum cohomology}

The \emph{ quantum cohomology ring} $\QH^*\flag$ is a commutative and associative graded algebra over $\ZZ[q_1,\ldots,q_\rho]$, where $q_i$ is a parameter of degree $r_{i-1}-r_{i+1}$.  As a module, $\QH^*\flag$
 is simply $\ZZ[q]\otimes_\ZZ H^*\flag$, so it has a $\ZZ[q]$-basis of Schubert classes $\sigma_w$:
\[
  \QH^*\flag = \bigoplus_{w\in \Sn} \ZZ[q]\cdot \sigma_w.
\]
The  quantum product is a deformation of the usual  product.  For permutations $u,v\in \Sn$, define a product by
\[
  \sigma_u * \sigma_v = \sum_{w,\dd} \qq^\dd\, c_{u,v}^{w,\dd}\, \sigma_w,
\]
where $\dd$ ranges over $(n-1)$-tuples of nonnegative integers, and the \emph{three-pointed  Gromov-Witten invariant} $c_{u,v}^{w,\dd}$ is defined as follows.

Let $\overline{M}_{0,3}(\flag,\dd)$ be the Kontsevich moduli space of three-pointed genus-zero stable maps to $\flag$ of degree $\dd$,  parametrizing data  $(f,C,(x_1,x_2,x_3))$, where $C$ is a genus-zero curve with marked points $x_i$,  $f:C\to\flag$ is a map of degree $\dd$, and a certain stability condition is imposed \cite{Kontsevich95}.  The space of stable maps is of dimension  $\dim \flag + \sum_{i=1}^3 d_i(r_{i-1}-r_{i+1})$,
and comes with natural evaluation morphisms
\[\ev_i: \overline{M}_{0,3}(\flag,\dd)\to \flag
\]
for $1\leq i\leq 3$ that send $(f,C,(x_1,x_2,x_3))$ to $f(x_i)$.  Now one defines
$c_{u,v}^{w,\dd}= \pi_*( \ev_1^*\sigma_u \cdot \ev_2^*\sigma_v \cdot \ev_3^*\sigma_{w^\vee} ).$
This defines an associative product. See \cite{fp} for more details on quantum cohomology.

\subsection{Quantum cohomology of flag varieties} \label{sec:qh}

The Schubert polynomials of Lascoux and Sch\"utzenberger are defined inductively, starting from $\Sch_{w_\circ}(x) = x_1^{n-1} x_2^{n-2} \cdots x_{n-1}$ and moving down Bruhat order using divided difference operators   \cite{ls}.  For any $w\in S_n$, the polynomial $\Sch_w(x)$ has a unique expansion in terms of elementary symmetric polynomials:
\begin{equation}\label{e:sch-elem}
  \Sch_w(x) = \sum a_{k_1\ldots k_{n-1}}\,e_{k_1}(1) \cdots e_{k_{n-1}}(n-1)
\end{equation}
over sequences $(k_1,\ldots,k_{n-1})$ with $0\leq k_j\leq j$ and $\sum k_j=\ell(w)$, where the $a_{k_1\ldots k_{n-1}}$ are integers and $e_k(j):=e_k(x_1,\ldots,x_j)$ is the $k$th elementary symmetric polynomial in the variables $x_1,\ldots,x_l$.

Let
\[
  \sigma_1^\rho,\ldots,\sigma_{r_\rho}^\rho,\,\sigma_1^{\rho-1},\ldots,\sigma_{r_{\rho-1}-r_\rho}^{\rho-1},\ldots,\sigma_1^{0},\ldots,\sigma_{n-r_1}^{0}
\] 
be $n$ independent variables, with $\sigma_i^j$ of degree $i$.  To form  quantum polynomials for $\flag$, one replaces $e_k(j)$ with {\em quantum elementary polynomials} $e^q_k(r_l)$, which are defined for $r_l \in\rr$ and $r_0=n$ recursively by   
 \begin{equation} \label{eq:eq-recursion}
 e^{\rr,q}_a(r_{l-1}) = \sum_{m=0}^{r_{l-1}-r_l} \sigma_m^l e^{\rr,q}_{a-m}(r_l) + (-1)^{r_{l-1}-r_l+1}q_l \, e^{\rr,q}_{a-(r_{l-1}-r_{l+1})}(r_{l+1}),
 \end{equation}
 where we set $e^q_0(r_l)=1$ and $e^q_m(r_l)=0$ if either $m<0$ or $m>r_l$. When $\rr$ is understood, we simply write $e^q_k(r_l)$ for $e^{\rr,q}_k(r_l)$. (Our conventions here differ from those found elsewhere in the literature, e.g. our $r_l$ and $\sigma_i^j$ correspond to $n_{\rho+1-l}$ and   $\sigma_i^{\rho+1-j}$ in \cite{cf}.)

From  \cite{cf,kimpresentation}, we know a presentation of the quantum cohomology ring and polynomial representatives of the quantum Schubert classes. 
\[
\QH^*\flag \cong \ZZ[q][\  \sigma_1^\rho,\ldots,\sigma_{r_\rho}^\rho,\ldots,\sigma_1^{0},\ldots,\sigma_{n-r_1}^{0}]/I^q,
\]
where $I^q$ is the ideal $({e}_1^{\rr,q}(n),\ldots,{e}_n^{\rr,q}(n))$ generated by $n$ relations ${e}_1^{\rr,q}(r_0)=0$ which specialize to the known relations defining $H^*\flag$ when $q \mapsto 0$, and
\[ \sigma_w =  \Sch^{\rr,q}_w(\sigma)\]
for $w\in \Sn$, where the {\em quantum Schubert polynomial} $\Sch^{\rr,q}_w(\sigma)$ is formed by substituting $e^{\rr,q}_k(r_l)$ for $e_k(j)$ on the RHS of \eqref{e:sch-elem} whenever $j\in[r_l,r_{l-1})$.   

The quantum structure constants of the alternate basis, $s_{\vec{\mu}}$, can be computed using rim-hook removals via the Abelian/non-Abelian correspondence \cite{gukalashnikov}. 

\subsection{Determinantal formulas}

In Section \ref{sec:theoremB}, we will study certain skew shapes $\lambda/\mu$ along with a labeling $\omega(i,j)=r_1+i-j$. 
By \cite{bjs},  associated to  $(\lambda/\mu,\omega)$ is a 321-avoiding permutation $w$ whose corresponding Schubert polynomial  is equal to a \emph{flagged skew Schur polynomial} that can be expressed as a determinant:
 \begin{equation}\label{eq:schub-det}
 \Sch_w(x) =  \left| e_{\lambda'_i-\mu'_j+j-i}(f_j) \right|_{1\leq i,j\leq t}
 \end{equation}
where $f_j=\omega(j,\lambda'_j)=r_1+j-\lambda'_j$ is the ``flagging" associated to $w$. 

 For a skew shape $\lambda/\mu$ and $\phi=(\phi_1,\ldots,\phi_t)$ with $1\leq \phi_i \leq \rho$, define
  \begin{equation} \label{eq:q-determinant}
\Delta_{\lambda/\mu}(e^q(\phi)):=\left| e^q_{\lambda'_i-\mu'_j+j-i}(r_{\phi_j}) \right|_{1\leq i,j\leq t}.
\end{equation}
When $\phi_j$ is   defined by $r_{\phi_j} \leq f_j <r_{\phi_j -1}$, substituting $e_k(j)=e^q_k(r_l)$  in \eqref{eq:schub-det}  as in  the discussion in Section \ref{sec:qh}, we obtain a determinantal expression for the
quantum Schubert class:
 \begin{equation} \label{eq:q-Schubert}\sigma_w = \Delta_{\lambda/\mu}(e^q(\phi)) \text{ in }QH^*\flag.
 \end{equation}

We can also define quantum classes $s^i_\lambda$ for partitions $\lambda$ by computing the determinant
 \eqref{eq:s-def} using the quantum product. 
When $\lambda\in P(r_{i-1},r_i)$, this gives the quantum Schubert class $\sigma_{\emptyset,\cdots,\lambda,\cdots,\emptyset}$, but $s^i_\lambda$ is also defined  when $\lambda\not\in P(r_{i-1},r_i)$. In particular, since $s^i_{1^a}=e_a(r_i)$ classically,  we have $s^i_{1^a} = e^q_a(r_i)$  in $\QH(\flag)$ and 
\begin{equation}\label{eq:si}
s^i_\lambda= \left| s^i_{1^{\lambda'_k+l-k}} \right|_{1\leq k,l\leq \lambda_1} =\Delta_\lambda(e^q(\phi)),
\end{equation}
where $\phi=(i,\ldots,i)$.

\begin{rem} \label{rem:skewzero}
If $\mu\not\subseteq\lambda$, then $\lambda'_k<\mu'_k$ for some $1\leq k\leq t$. If $i\geq k$ and $j\leq k$, the $(i,j)$th entry of the matrix in $\Delta_{\lambda/\mu}$ is indexed by $\lambda'_i-\mu'_j+j-i<\lambda'_k-\mu'_k<0$, and so is zero. Since the matrix is block upper triangular with left upper block of determinant zero, $\Delta_{\lambda/\mu} =0$.
\end{rem}

\section{Cluster structure and superpotentials}\label{sec:background2}

\subsection{The cluster structure of the Grassmannian}
In this section, some brief facts about the cluster structure of the Grassmannian are recalled. Good references include \cite{scott, RW}. Fix a Grassmannian of quotients $\Gr(n,r)$. Pl\U cker coordinates on the Grassmannian are indexed by partitions $\lambda$ fitting in an $r \times (n-r)$ box, i.e. by $\lambda \in P(n,r)$. The homogeneous coordinate ring of the Grassmannian is generated by $p_\lambda, \lambda \in P(n,r)$, and relations are given by the Pl\U cker relations. 

This ring, as well as certain localizations of it, has a cluster structure. Certain sets of algebraically independent Pl\U cker coordinates are clusters. An important example of a cluster is the \emph{rectangles cluster}.

 \begin{mydef} The \emph{rectangles cluster chart} is the set of Pl\U cker coordinates indexed by all partitions $\lambda \in P(n,r)$ such that $\lambda$ is a rectangle. 
 \end{mydef}
 
 One cluster can be obtained from another via \emph{mutation}. These mutations arise from three-term Pl\U cker relations \cite{scott}. The three term quadratic Pl\U cker relations  are of the form
\[p_\lambda p_\mu = p_a p_b + p_c p_d\]
where $\lambda, \mu, a, b, c, d \in P(n,r)$ are six partitions related in a particular way. A cluster containing $p_\lambda, p_a, p_b, p_c,$ and $p_d$ can be mutated to one containing $p_\mu,  p_a, p_b, p_c,$ and $p_d$.  Any cluster is related to any other by a series of mutations of this form. 

\subsection{Superpotentials of the Grassmannian}
The Eguchi--Hori--Xiong (EHX) superpotential  for Grassmannians is described by building a ladder diagram for the Grassmannian,  and super-imposing a dual quiver on the diagram. The ladder diagram for the Grassmannian $\Gr(n,r)$ is an $r \times (n-r)$ grid. There is a toric degeneration of the Grassmannian to the quiver moduli space described by a quiver originating from the ladder diagram.  The superpotential is given by a head-over-tails process on the dual quiver. We illustrate this briefly in the example $\Gr(5,2)$: the ladder diagram is a $2 \times 3$ grid:
\[\begin{tikzpicture}[scale=0.6]
\draw (0,0) rectangle (1,1);
\draw (1,1) rectangle (2,2);
\draw (0,1) rectangle (1,2);
\draw (1,0) rectangle (2,1);
\draw (2,1) rectangle (3,2);
\draw (2,0) rectangle (3,1);
\end{tikzpicture}.\]
The dual quiver is then:
\[\begin{tikzpicture}[scale=0.6]
\draw[gray] (0,0) rectangle (1,1);
\draw[gray] (1,1) rectangle (2,2);
\draw[gray] (0,1) rectangle (1,2);
\draw[gray] (1,0) rectangle (2,1);
\draw[gray] (2,1) rectangle (3,2);
\draw[gray] (2,0) rectangle (3,1);
\node[circle,fill,scale=0.3] at (0.5,0.5) (a) {};
\node[circle,fill,scale=0.3] at (1.5,0.5) (b) {};
\node[circle,fill,scale=0.3] at (2.5,0.5) (c) {};
\node[circle,fill,scale=0.3] at (0.5,1.5) (e) {};
\node[circle,fill,scale=0.3] at (0.5,2.5) (f) {};
\node[circle,fill,scale=0.3] at (1.5,1.5) (i) {};
\node[circle,fill,scale=0.3] at (2.5,1.5) (k) {};
\node[circle,fill,scale=0.3] at (3.5,0.5) (m) {};
\draw[<-] (c)--(b);
\draw[<-] (b)--(a);
\draw[<-] (m)--(c);
\draw[<-] (k)--(i);
\draw[<-] (i)--(e);

\draw[->] (e)--(a);
\draw[->] (f)--(e);
\draw[->] (i)--(b);
\draw[->] (k)--(c);
\end{tikzpicture}.\]
In general, to form the dual quiver, place a vertex in each box of the ladder diagram, as well as one at the top left and bottom right of the diagram, and then add arrows oriented down and right. 

To obtain the superpotential, assign to each of the vertices a variable $z_{ij}$, where $i$ indicates the row (starting at 0) and $j$ the column (starting at 1). We set $z_{10}=1$ and $z_{r (n-r+1)}=q$. The EHX superpotential is then:
\[W_{EHX}=\sum_{a} \frac{z_{h(a)}}{z_{t(a)}}.\]
 The sum is over the arrows of the quiver, and $h(a)$ and $t(a)$ indicate the head and tail of an arrow respectively. 
 \begin{eg} The EHX superpotential for $\Gr(4,2)$ is
 \[z_{11}+\frac{z_{12}}{z_{11}}+\frac{z_{21}}{z_{11}}+\frac{z_{22}}{z_{12}}+\frac{z_{22}}{z_{21}}+\frac{q}{z_{22}}.\]
 \end{eg}
  
A superpotential is a mirror to a Fano manifold if information about the genus 0 Gromov--Witten invariants of the Fano manifold can be computed by the superpotential. More precisely, one or both of the following conditions might hold:
\begin{enumerate}
\item The period sequence of the superpotential is equal to the regularized quantum period of the Fano manifold (see \cite{fanomanifolds} for definitions and details).
\item The Jacobi ring of the superpotential computes the quantum cohomology ring of the Fano manifold. 
\end{enumerate}
The first condition was the original conjecture of Eguchi--Hori--Xiong, later proved by Marsh--Rietsch \cite{MarshRietsch} for the Grassmannian.  This conjecture remains open for flag varieties.   

The second condition -- that the superpotential produces relations in the quantum cohomologry  ring -- is the central focus of the paper. We first discuss the proof in the case of the Pl\U cker coordinate mirror for the Grassmannian,  introduced by Marsh--Rietsch  in \cite{MarshRietsch}; the same statement for the EHX mirror is obtained as a corollary. 

To construct the Pl\U cker coordinate mirror the Grassmannian $\Gr(n,r)$, take $n$ equations of the form 
 \[s_{\yng(1)} * s_\lambda=q^i s_{\mu}\]
 where $i=0,1$ depending on the partition. Here $\lambda=(a,\dots,a) \in P(n,r)$ is either the empty set or a rectangular partition either maximally wide or maximally tall: we denote the set of such partitions $M(n,r)$. 
 \begin{rem} $M(n,r)$ is the set of \emph{frozen variables} in the cluster structure of the Grassmannian: they appear in every cluster. 
 \end{rem}
 Note that the sum 
 \[ \sum_{\lambda \in M(n,r)} \frac{q^{i} s_\mu}{s_\lambda}\]
is equal to $n s_{\yng(1)}=-K_{\Gr(n,r)}$, the anti-canonical class of the Grassmannian. An analogous statement is true for the Hori--Vafa mirror of a toric variety. 

To transform the sum into a (rational) function, every Schubert class $s_\lambda$ is replaced with the Pl\U cker coordinate $p_\lambda$.
\begin{eg} 
The Marsh--Rietsch Pl\U cker coordinate superpotential for $\Gr(4,2)$ is
\[\frac{p_{\yng(1)}}{p_{\emptyset}}+\frac{p_{\yng(2,1)}}{p_{\yng(2)}}+\frac{p_{\yng(2,1)}}{p_{\yng(1,1)}}+\frac{q p_{\yng(1)}}{p_{\yng(2,2)}}.\]
\end{eg}
Following \cite{MarshRietsch}, we denote the open subvariety on which the Pl\U cker coordinate superpotential is a function (i.e. where $p_\lambda \neq 0$, $\lambda \in M(n,r)$) as $\Gr(n,n-r)^\circ$. 

Using Pl\U cker relations, we can expand the Pl\U cker coordinate mirror into a Laurent polynomial in each cluster chart in the cluster structure on the coordinate ring of the Grassmannian. 
 \begin{eg} We can use the three term Pl\U cker relation
 \[p_{\yng(1)} p_{\yng(2,1)}=p_{\yng(1,1)}p_{\yng(2)}+p_{\emptyset} p_{\yng(2,2)}\]
 to find that in the rectangles cluster chart, the mirror for $\Gr(4,2)$ is
\[\frac{p_{\yng(1)}}{p_{\emptyset}}+\frac{p_{\yng(2)}}{p_{\yng(1)}}+\frac{p_{\yng(1,1)}}{p_{\yng(1)}}+\frac{p_{\yng(2,2)}}{p_{\yng(2)}}+\frac{p_{\yng(2,2)}}{p_{\yng(1,1)}}+\frac{q p_{\yng(1)}}{p_{\yng(2,2)}}.\]
 \end{eg}
In each cluster chart, one can compute the critical locus by setting the partial derivatives to zero: $\frac{\partial}{\partial W_C}W_C=0.$ It is clear how to interpret these equations as candidate relations in quantum cohomology: both Pl\U cker coordinates and Schubert classes of the Grassmannian $\Gr(n,r)$ are indexed by the same set of partitions, $\lambda \subset r \times (n-r)$. 
\begin{thm}[\cite{MarshRietsch}]
The Jacobi ring of the Pl\U cker coordinate mirror is isomorphic to the quantum cohomology ring of the Grassmannian. 
\end{thm}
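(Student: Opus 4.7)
The plan is to establish the isomorphism between the Jacobi ring of $W_P$ and $\QH^*\Gr(n,r)$ in two stages: first, construct a ring map by showing that each partial derivative of $W_{P,C}$, under the replacement $p_\lambda\mapsto s_\lambda$, vanishes in quantum cohomology; second, upgrade this to an isomorphism by matching both sides against the Siebert--Tian presentation.

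For the first stage, I would fix a cluster chart $C$ containing the frozen variables $p_\lambda$ for $\lambda\in M(n,r)$, and use three-term Plücker relations to expand $W_P$ as a Laurent polynomial $W_{P,C}$ in the cluster coordinates. The essential input is that the numerators in the original sum $W_P=\sum_{\lambda\in M(n,r)} q^{i(\lambda)}p_{\mu(\lambda)}/p_\lambda$ are built from the quantum Pieri products $s_{\yng(1)}\ast s_\lambda$; so the map $p\mapsto s$ sends $W_P$ itself to $n\cdot s_{\yng(1)}=-K_{\Gr(n,r)}$ in $\QH^*$. Taking $\partial/\partial p_\nu$ in a cluster chart and then applying $p\mapsto s$, I would regroup the resulting terms into linear combinations of the form $s_{\yng(1)}\ast s_\nu-\sum s_{\text{child}}$, where the ``children'' record the boxes that can be added to $\nu$ (with quantum rim-hook corrections for the frozen edges). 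Each such expression vanishes in $\QH^*\Gr(n,r)$ by the quantum Pieri rule of Bertram, so the partial derivative lies in the quantum ideal. This is the technical heart, and the main obstacle: organizing the contributions from Plücker-mutated cluster expansions so they assemble precisely into instances of the quantum Pieri rule.

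For the second stage, I would show that the relations obtained this way cut out $\QH^*$ on the nose. The Siebert--Tian presentation gives
\[ \QH^*\Gr(n,r)\;\cong\;\ZZ[q][e_1,\ldots,e_r]\big/\bigl(h_{n-r+1},\ldots,h_{n-1},\,h_n+(-1)^{r-1}q\bigr), \]
and both the Jacobi ring and $\QH^*$ are free of rank $\binom{n}{r}$ over $\ZZ[q,q^{-1}]$ after inverting the frozen variables (equivalently $q$). So it suffices to exhibit the defining relations $h_{n-r+k}=(-1)^{r-k}\delta_{k,r}q$ among the images of $\partial W_{P,C}/\partial p_\nu$, and then compare ranks to conclude that no additional relations remain. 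To produce the $h$-relations, I would specialize the derivative identities to the Pieri-chain corresponding to the ``column'' partitions $\nu=(1^k)$ in the rectangles cluster, where the combinatorics collapses to the classical Jacobi--Trudi identity plus one quantum correction.

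An alternative route, closer in spirit to Marsh--Rietsch, is to compare $W_P$ with the Lie-theoretic superpotential $W_q$ on the Peterson-type mirror variety: there the Jacobi ring is manifestly a quotient whose fiber dimension matches $\dim\QH^*$, and one transfers the identification by verifying that the two superpotentials agree on a dense torus after cluster expansion. Either route reduces the problem to the combinatorial compatibility between three-term Plücker relations and the quantum Pieri rule, which is where I expect the real work to lie.
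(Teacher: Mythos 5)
The paper itself gives no proof of this statement: it is quoted verbatim from \cite{MarshRietsch}, so the only meaningful comparison is with Marsh--Rietsch's published argument. Your ``alternative route'' is in fact their actual route: they identify the Pl\U cker coordinate superpotential with Rietsch's earlier Lie-theoretic superpotential \cite{lietheoretic}, whose Jacobi ring had already been matched with $\QH^*\Gr(n,r)$ via the Peterson variety presentation, and then transfer the isomorphism. They do not run the quantum-Pieri-in-cluster-charts computation that you propose as your primary route. (Interestingly, the reduction you sketch in stage one --- regrouping cluster-chart partial derivatives into quantum Pieri identities --- is close in spirit to how the present paper proves its flag-variety Theorem A, but there it is used to \emph{reduce to} the Marsh--Rietsch theorem, not to reprove it.)

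The genuine gap is in your stage two. Granting stage one, you obtain a ring map from the Laurent polynomial ring of a cluster chart to a localization of $\QH^*\Gr(n,r)$ killing the Jacobi ideal, hence a surjection from the Jacobi ring onto its image; producing the Siebert--Tian relations $h_{n-r+1},\ldots,h_{n-1},h_n+(-1)^{r-1}q$ among the images then shows this image is all of (localized) $\QH^*$. But your concluding step --- ``compare ranks to conclude that no additional relations remain'' --- assumes that the Jacobi ring is free of rank $\binom{n}{r}$ over $\ZZ[q,q^{-1}]$, and that assumption is essentially equivalent to the theorem itself: it amounts to knowing that $W$ has exactly $\binom{n}{r}$ critical points (counted with multiplicity) on the mirror torus for generic $q$. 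Marsh--Rietsch obtain this count from the Lie-theoretic/Peterson side; without an independent proof of it (e.g.\ a direct Morse-theoretic or flatness argument for the family of Jacobi rings over $\Spec\ZZ[q,q^{-1}]$), your rank comparison is circular, and the surjection you construct could a priori have a kernel. Any repair of your primary route must supply this critical-point count, at which point you have effectively rejoined the Marsh--Rietsch argument.
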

The Pl\U cker coordinate mirror is a compactification of the EHX mirror: that is,
\begin{prop}[\cite{MarshRietsch}]\label{pro:isogr}
The Pl\U cker coordinate mirror in the rectangles cluster chart is isomorphic to the EHX mirror under the map
\[z_{ij} \mapsto \frac{p_{i \times j}}{p_{(i-1)\times {(j-1)}}}.\]
\end{prop}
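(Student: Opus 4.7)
The plan is a direct computation: apply the substitution $z_{ij} \mapsto p_{i \times j}/p_{(i-1)\times(j-1)}$ to the EHX superpotential, and verify term-by-term that the result agrees with the expansion of the Pl\U cker coordinate mirror $W_P$ in the rectangles cluster chart. First, I check that the substitution is compatible with the boundary values: $z_{10}=1$ becomes $p_\emptyset/p_\emptyset = 1$, while $z_{r,n-r+1}=q$ becomes $q$ under the cyclic extension convention $p_{r\times(n-r+1)} := q\cdot p_{(r-1)\times(n-r)}$, which also extends the substitution past the $r\times(n-r)$ box. Under the substitution, each interior arrow of the dual quiver yields a cross-ratio of four rectangle Pl\U cker coordinates; for example, a horizontal arrow $(i,j-1)\to(i,j)$ maps to
\[
\frac{z_{ij}}{z_{i,j-1}} \;\mapsto\; \frac{p_{i\times j}\, p_{(i-1)\times(j-2)}}{p_{i\times(j-1)}\, p_{(i-1)\times(j-1)}},
\]
and the vertical arrows give analogous cross-ratios.

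Next, I would expand $W_P = \sum_{\lambda\in M(n,r)} q^{i(\lambda)}\,p_{\mu(\lambda)}/p_\lambda$ in the rectangles cluster using iterated short three-term Pl\U cker relations. For the two corner frozens $\lambda=\emptyset$ and $\lambda=((n-r)^r)$, the numerators $p_{\mu(\lambda)}$ are already rectangles (namely $(1)$ and $((n-r-1)^{r-1})$ respectively, by the quantum Pieri rule), so these two terms contribute the boundary-arrow monomials $p_{(1)}/p_\emptyset$ and $q\,p_{(r-1)\times(n-r-1)}/p_{(n-r)^r}$ directly. For each of the remaining $n-2$ frozen rectangles, $\mu(\lambda)$ is obtained from $\lambda$ by appending one staircase step along the outer boundary of the $r\times(n-r)$ box, and each application of a Pl\U cker relation peels off a single cross-ratio of rectangle coordinates, eventually expressing $p_{\mu(\lambda)}/p_\lambda$ as a sum of such cross-ratios.

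Finally, I match these cross-ratios with the images of the EHX arrows. Each frozen rectangle $\lambda$ corresponds to a specific collection of quiver arrows adjacent to $\lambda$, and the cross-ratios produced in the expansion of $p_{\mu(\lambda)}$ match the substituted monomials of exactly these arrows. The main obstacle, and the core combinatorial content of the proof, is this bookkeeping: showing that the set of arrows of the quiver is partitioned by the $n$ frozen $\lambda$'s, with each arrow appearing in exactly one expansion and matching the correct substituted monomial. I would organize this via induction on $r$ (or symmetrically on $n-r$), reducing to smaller Grassmannians at the boundary and verifying the base cases directly. A degree count serves as a sanity check: both sides produce the same total number of monomials, namely the number of edges of the ladder diagram, so once the cross-ratios are shown to match locally the global bijection is essentially forced.
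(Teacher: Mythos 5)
The paper itself gives no proof of this proposition --- it is imported from Marsh--Rietsch \cite{MarshRietsch} --- so the only meaningful comparison is with their original argument, and your proposal follows it in essentially the same way: expand each frozen term $p_{\mu(\lambda)}/p_\lambda$ of the superpotential in the rectangles chart by iterated three-term Pl\"ucker relations into a telescoping sum of cross-ratios of rectangle coordinates, then match these one-to-one with the substituted arrow monomials of the EHX quiver, handling the $q$-boundary exactly as your cyclic extension convention $p_{r\times(n-r+1)}:=q\,p_{(r-1)\times(n-r)}$ does (this is consistent with the quantum rim-hook rule, and your corner terms $p_{(1)}/p_\emptyset$ and $q\,p_{((n-r-1)^{r-1})}/p_{((n-r)^r)}$ are correct by quantum Pieri). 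Two minor caveats: $\mu(\lambda)$ is $\lambda$ plus a single box, not a ``staircase step'' (the staircase only emerges in the peeling process), and the arrow bookkeeping you defer --- that the quiver arrows are partitioned among the $n$ frozen terms with each cross-ratio matching the correct arrow --- is the actual content of the Marsh--Rietsch computation, which they settle by explicit Pl\"ucker identities for a rectangle plus a box rather than by your proposed induction on $r$; either organization works, and your degree/edge count is a sound consistency check.
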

This proposition and theorem can be combined to show the following theorem:
 \begin{thm}[\cite{MarshRietsch}] \label{thm:thmMR}. Let $F: \CC[z_{ij}] \to \QH^*\Gr(n,r)[s_\lambda^{-1}: \lambda \in R]$  be the map given by  $z_{ij} \mapsto \frac{s_{i \times j}}{s_{(i-1)\times {j-1}}}$. Then for any $z_{ij}$, 
\[\phi\left(\frac{\partial}{\partial z_{ij}} W_{EHX}\right)=0.\]
\end{thm}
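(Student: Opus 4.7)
The plan is to transfer the vanishing of partial derivatives from the Pl\"ucker coordinate mirror to the EHX mirror via the chain rule, using the identification provided by Proposition \ref{pro:isogr} together with the (already-cited) Marsh--Rietsch Jacobi ring theorem.

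By Proposition \ref{pro:isogr}, the substitution $z_{ij} = p_{i\times j}/p_{(i-1)\times(j-1)}$ (with the standard boundary conventions $z_{1,0}=1$, $z_{r,n-r+1}=q$, and $p_{0\times j}=p_{i\times 0}=p_\emptyset$) gives an equality of rational functions $W_{EHX}(z(p)) = W_{P,C}(p)$ on $\Gr(n,n-r)^\circ$, where $W_{P,C}$ is the Pl\"ucker coordinate mirror in the rectangles cluster chart. Because each Pl\"ucker coordinate $p_{k\times l}$ appears in the substitution of at most two $z$-variables (in the numerator of $z_{k,l}$ and in the denominator of $z_{k+1,l+1}$), the chain rule produces the two-term identity
\[
\frac{\partial W_{P,C}}{\partial p_{k\times l}} \;=\; \frac{1}{p_{(k-1)\times(l-1)}}\,\frac{\partial W_{EHX}}{\partial z_{k,l}} \;-\; \frac{p_{(k+1)\times(l+1)}}{p_{k\times l}^{\,2}}\,\frac{\partial W_{EHX}}{\partial z_{k+1,l+1}},
\]
with terms omitted when indices fall outside the allowed range.

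Solving this relation for $\partial_{z_{k,l}} W_{EHX}$ yields
\[
\frac{\partial W_{EHX}}{\partial z_{k,l}} \;=\; p_{(k-1)\times(l-1)}\,\frac{\partial W_{P,C}}{\partial p_{k\times l}} \;+\; \frac{p_{(k-1)\times(l-1)}\,p_{(k+1)\times(l+1)}}{p_{k\times l}^{\,2}}\,\frac{\partial W_{EHX}}{\partial z_{k+1,l+1}}.
\]
I would then induct downward on $k+l$. In the base case $(k,l)=(r,n-r)$ the second term is absent (as $z_{r+1,n-r+1}$ is not a variable), so $\partial_{z_{r,n-r}} W_{EHX}$ is a multiple of $\partial_{p_{r\times(n-r)}} W_{P,C}$. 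By the Marsh--Rietsch Jacobi ring theorem, every partial derivative $\partial_{p_\lambda} W_{P,C}$ with respect to a rectangle Pl\"ucker coordinate in the cluster is killed by the Schubert map to $\QH^*\Gr(n,r)[s_\lambda^{-1}\,:\,\lambda\in R]$; noting that the map $F$ of the theorem factors as this Schubert map composed with the substitution $z_{ij}\mapsto p_{i\times j}/p_{(i-1)\times(j-1)}$, we get $F(\partial_{z_{r,n-r}} W_{EHX})=0$. In the inductive step the hypothesis provides $F(\partial_{z_{k+1,l+1}} W_{EHX})=0$, and the displayed identity combined with the Marsh--Rietsch vanishing on the first term then gives $F(\partial_{z_{k,l}} W_{EHX})=0$.

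The main difficulty is not mathematical depth but bookkeeping at the boundary of the ladder. One must verify that the chain rule is applied correctly at the corners (including derivatives with respect to $p_\emptyset$, where several $z_{1,j}$ and $z_{i,1}$ all depend on $p_\emptyset$ under the convention $p_{0\times j}=p_{i\times 0}=p_\emptyset$), and one must confirm that the Marsh--Rietsch Jacobi ring statement applies to partial derivatives with respect to every Pl\"ucker variable in the cluster, including the frozen ones in $M(n,r)$. With these points handled, the induction goes through mechanically.
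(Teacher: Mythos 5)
Your proposal is correct and takes essentially the same route as the paper, which obtains this statement precisely by combining Proposition \ref{pro:isogr} with the Marsh--Rietsch Jacobi-ring theorem; your chain-rule identity and downward induction along the diagonals simply make that one-sentence combination explicit (and the base cases are all $(k,l)$ with $k=r$ or $l=n-r$, not only the corner, though your ``omit out-of-range terms'' convention already handles this). The boundary worry about $\partial/\partial p_\emptyset$ is in fact moot, since your induction only ever differentiates with respect to non-empty rectangles, each of which occurs in exactly one numerator and at most one denominator of the substitution.
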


\subsection{Superpotentials of flag varieties}
We first recall the Batyrev--Ciocan-Fontanine--Kim--van Straten generalization of the EHX mirror to flag varieties \cite{flagdegenerations}. 
Fixing $\Fl(n;r_1,\dots,r_\rho)$, for each Grassmannian step $\Gr(r_{i-1},r_i)$ draw an $r_i \times (r_{i-1}-r_i)$ grid of boxes, placing them together.  For example, the ladder diagram of $\Fl(5,4,2,1)$ is
\[\begin{tikzpicture}[scale=0.6]
\draw (0,0) rectangle (1,1);
\draw (1,1) rectangle (2,2);
\draw (0,1) rectangle (1,2);
\draw (1,0) rectangle (2,1);
\draw (2,1) rectangle (3,2);
\draw (0,2) rectangle (1,3);
\draw (0,3) rectangle (1,4);
\draw (2,0) rectangle (3,1);
\draw (3,0) rectangle (4,1);
\end{tikzpicture}.\]
The dual quiver is similar to the Grassmannian  case. There are vertices inside each box, as well as at the top left and bottom right corners and in the inner corner of each step of the diagram. In this example, it is
\[\begin{tikzpicture}[scale=0.6]
\draw (0,0) rectangle (1,1);
\draw (1,1) rectangle (2,2);
\draw (0,1) rectangle (1,2);
\draw (1,0) rectangle (2,1);
\draw (2,1) rectangle (3,2);
\draw (0,2) rectangle (1,3);
\draw (0,3) rectangle (1,4);
\draw (2,0) rectangle (3,1);
\draw (3,0) rectangle (4,1);
\node[circle,fill,scale=0.3] at (0.5,0.5) (a) {};
\node[circle,fill,scale=0.3] at (1.5,0.5) (b) {};
\node[circle,fill,scale=0.3] at (2.5,0.5) (c) {};
\node[circle,fill,scale=0.3] at (3.5,0.5) (d) {};
\node[circle,fill,scale=0.3] at (0.5,4.5) (l) {};
\node[circle,fill,scale=0.3] at (0.5,1.5) (e) {};
\node[circle,fill,scale=0.3] at (0.5,2.5) (f) {};
\node[circle,fill,scale=0.3] at (0.5,3.5) (g) {};
\node[circle,fill,scale=0.3] at (4.5,0.5) (h) {};
\node[circle,fill,scale=0.3] at (1.5,1.5) (i) {};
\node[circle,fill,scale=0.3] at (1.5,2.5) (j) {};
\node[circle,fill,scale=0.3] at (2.5,1.5) (k) {};
\node[circle,fill,scale=0.3] at (3.5,1.5) (m) {};
\draw[<-] (h)--(d);
\draw[<-] (d)--(c);
\draw[<-] (c)--(b);
\draw[<-] (b)--(a);
\draw[<-] (m)--(k);
\draw[<-] (k)--(i);
\draw[<-] (i)--(e);
\draw[<-] (j)--(f);

\draw[->] (e)--(a);
\draw[->] (f)--(e);
\draw[->] (g)--(f);
\draw[->] (i)--(b);
\draw[->] (j)--(i);
\draw[->] (k)--(c);
\draw[->] (l)--(g);
\draw[->] (m)--(d);
\end{tikzpicture}.\]
Assigning to each of the vertices  a variable $z_v$, the EHX superpotential is:

\[W_{EHX}=\sum_{a} \frac{z_{h(a)}}{z_{t(a)}}.\]

In \cite{kflags}, the second author proposes a generalization of the Pl\U cker coordinate mirror from Grassmannians to type A flag varieties. We recall the construction now. Fix a flag variety $\Fl(n,r_1,\dots,r_\rho)$. For each $i=1,\dots,\rho$, we can consider $r_{i-1}$  equations
 \[s^i_{\yng(1)} * s^i_\lambda=F^i_\lambda,\]
where $\lambda \in M(r_{i-1},r_i)$, and $F^i_\lambda$ is simply the expansion of the left hand side in quantum Schubert calculus. This can be described explicitly -- see \cite{kflags} for details. As in the Marsh--Rietsch construction, we can use this to obtain an expression of the anti-canonical class of the flag variety:

\[\sum_{i=1}^\rho \left(\sum_{\lambda \in M(r_{i-1},r_i)} \frac{F^i_\lambda}{s^i_{\lambda}}\right)-r_{I+1} s^i_{\yng(1)}.\]

The set $P(n,\mathbf{r})$ naturally indexes elements of the coordinate ring of the product of Grassmannians 
\[Y(n,\mathbf{r}):=\prod_{i=1}^\rho \Gr(r_{i-1},r_i).\]
Let $Q_i$ be the tautological quotient bundle pulled back to $Y(n,\mathbf{r})$ from the $i^{th}$ Grassmannian factor. Sections of $\det(Q_i)$ are indexed by $\lambda \in P(r_{i-1},r_i)$. We write $p^i_\lambda$ for the Pl\U cker coordinate associated to $i$ and $\lambda$. 

We denote $Y(n,\mathbf{r})^\circ:=\prod_{i=1}^\rho \Gr(r_{i-1},r_{i-1}-r_i)^\circ$ the locus in $Y(n,\mathbf{r})$ where $p^i_\lambda \neq 0$ for all $i$ and $\lambda \in M(r_{i-1},r_i)$. This is the complement of an anti-canonical divisor on $Y(n,\mathbf{r})$. 

To each Schubert class $s_{\vec{\mu}}$ we associate the product
\[p_{\vec{\mu}}:=\prod_{i=1}^\rho p^i_{\mu_i}.\]
We denote the polynomial in the coordinate ring of $Y(n,\mathbf{r})$ and the $q_1,\dots,q_\rho$ obtained by replacing the Schubert classes in $F^i_\lambda$ with Pl\U cker coordinates in this way as $G^i_\lambda$. 
\begin{mydef}\label{def:mirror} The Pl\U cker coordinate superpotential $W_P$ of the flag variety is
\[\sum_{i=1}^\rho \left(\sum_{\lambda \in M(r_{i-1},r_i)} \frac{G^i_\lambda}{p^i_{\lambda}}\right)-r_{I+1} p^i_{\yng(1)}.\]
\end{mydef}

\begin{eg} Consider the flag variety $\Fl(6;4,2,1)$. The Pl\U cker coordinate superpotential is
\begin{align*}
\frac{p^1_{\yng(1)}}{p^1_\emptyset}+\frac{p^1_{\yng(2,1,1,1)}}{p^1_{\yng(1,1,1,1)}}+\frac{p^1_{\yng(2,1)}}{p^1_{\yng(2)}}+\frac{p^1_{\yng(2,2,1)}+q_1 p^1_{\yng(1)}}{p^1_{\yng(2,2)}}+\frac{p^1_{\yng(2,2,2,1)}+q_1 p^1_{\yng(1,1)} p^2_{\yng(1)}}{p^1_{\yng(2,2,2)}}+\frac{q_1 p^1_{\yng(1,1,1)} p^2_{\yng(1,1)}}{p^1_{\yng(2,2,2,2)}}
\\+\frac{p^2_{\yng(1)}}{p^2_\emptyset}+\frac{p^2_{\yng(2,1)}}{p^2_{\yng(1,1)}}+\frac{p^2_{\yng(2,1)}+q_2}{p^2_{\yng(2)}}+\frac{q_2 p^2_{\yng(1)} p^3_{\yng(1)}}{p^2_{\yng(2,2)}}+\frac{p^3_{\yng(1)}}{p^3_{\emptyset}}+\frac{q_3}{p^3_{\yng(1)}}.
\end{align*}
\end{eg}
By choosing a cluster chart for each Grassmannian factor of $Y(n,\mathbf{r})$, we can expand the Pl\U cker coordinate mirror of the flag variety into algebraically independent sets of coordinates on $Y(n,\mathbf{r})$. 

In \cite{kflags}, a first check of the validity Pl\U cker coordinate mirror is carried out by demonstrating that the Pl\U cker coordinate mirror is a compactification of the EHX mirror (that is, Proposition \ref{pro:isogr} in the flag case). Fix a flag variety $\flag$. Recall that the ladder diagram is made up of the ladder diagrams of $\rho$ Grassmannians, i.e. an $r_i \times (r_{i-1} - r_i)$ grid for each $i$. Given a vertex $v$ in the $i^{th}$ block of the dual quiver, let $\phi(z_v)$ be as prescribed in the Grassmannian case for $\Gr(r_{i-1},r_i)$, and then scale by $q_1 \cdots q_{i-1}$.
\begin{eg}\label{eg:labels}
 To demonstrate, we label the vertices with $\phi(z_v)$ in the following example (where the flag variety is $\Fl(5;3,2,1)$):
\[\begin{tikzpicture}[scale=1.8]
\draw[gray] (0,0) rectangle (1,1);
\draw[gray] (1,1) rectangle (2,2);
\draw[gray] (0,1) rectangle (1,2);
\draw[gray] (1,0) rectangle (2,1);
\draw[gray] (2,1) rectangle (3,2);
\draw[gray] (0,2) rectangle (1,3);
\draw[gray] (1,2) rectangle (2,3);
\draw[gray] (2,0) rectangle (3,1);
\draw[gray] (3,0) rectangle (4,1);
\node at (0.5,0.5) (a) {$\frac{p^1_{\yng(1,1,1)}}{p^1_\emptyset}$};
\node at (1.5,0.5) (b) {$\frac{p^1_{\yng(2,2,2)}}{p^1_{\yng(1,1)}}$};
\node at (2.5,0.5) (c) {$\frac{q_1  p^2_{\yng(1,1)}}{p^2_\emptyset}$};
\node at (3.5,0.5) (d) {$ \frac{q_1 q_2 p^3_{\yng(1)}}{p^3_\emptyset}$};
\node at (0.5,1.5) (e) {$\frac{p^1_{\yng(1,1)}}{p^1_\emptyset}$};
\node at (0.5,2.5) (f) {$\frac{p^1_{\yng(1)}}{p^1_\emptyset}$};
\node at (0.5,3.5) (g) {1};
\node at (4.5,0.5) (h) {$q_1 q_2 q_3$};
\node at (1.5,1.5) (i) {$\frac{p^1_{\yng(2,2)}}{p^1_{\yng(1)}}$};
\node at (1.5,2.5) (j) {$\frac{p^1_{\yng(2)}}{p^1_\emptyset}$};
\node at (2.5,1.5) (k) {$\frac{q_1  p^2_{\yng(1)}}{p^2_\emptyset}$};
\node at (2.5,2.5) (l) {$q_1$};
\node at (3.5,1.5) (m) {$q_1 q_2$};
\draw[<-] (h)--(d);
\draw[<-] (d)--(c);
\draw[<-] (c)--(b);
\draw[<-] (b)--(a);
\draw[<-] (m)--(k);
\draw[<-] (k)--(i);
\draw[<-] (i)--(e);
\draw[<-] (l)--(j);
\draw[<-] (j)--(f);

\draw[->] (e)--(a);
\draw[->] (f)--(e);
\draw[->] (g)--(f);
\draw[->] (i)--(b);
\draw[->] (j)--(i);
\draw[->] (k)--(c);
\draw[->] (l)--(k);
\draw[->] (m)--(d);
\end{tikzpicture}.\]
\end{eg}
\begin{thm}[\cite{kflags}] \label{thm:flagMR} For any type A flag variety, the Pl\U cker coordinate mirror in the rectangles cluster chart is isomorphic to the EHX mirror under the isomorphism
\[z_v \mapsto \phi(z_v).\]
\end{thm}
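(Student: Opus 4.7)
The proof plan is to match the two expressions term by term, exploiting the decomposition of the ladder diagram of $\flag$ into $\rho$ stacked rectangular sub-diagrams, one per Grassmannian factor $\Gr(r_{i-1},r_i)$, and reducing the flag case to $\rho$ applications of the Grassmannian Marsh--Rietsch isomorphism (Proposition \ref{pro:isogr}). The arrows of the dual quiver split into \emph{internal} arrows, confined to a single block, and \emph{interface} arrows, connecting two consecutive blocks at the step between them. Under $z_v \mapsto \phi(z_v)$, the EHX superpotential becomes a sum over arrows, and I plan to match these two classes of arrows separately with pieces of $W_P$.

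First I would analyze the internal arrows of block $i$. From Example \ref{eg:labels}, every vertex label in block $i$ carries a uniform global factor of $q_1\cdots q_{i-1}$, which cancels in any internal arrow ratio. The restricted substitution $z_v \mapsto \phi(z_v)/(q_1\cdots q_{i-1})$ agrees exactly with the Grassmannian substitution of Proposition~\ref{pro:isogr} applied to $\Gr(r_{i-1}, r_i)$, except that the single ``$q$-arrow'' of the isolated Grassmannian---which would normally contribute a term involving the Grassmannian's own quantum parameter---is replaced in the flag diagram by an interface arrow whose head sits in block $i+1$. Marsh--Rietsch's isomorphism for that block then identifies the sum of internal contributions with the purely classical (in $q_i$) piece of $\sum_{\lambda\in M(r_{i-1},r_i)} G^i_\lambda/p^i_\lambda$.

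Next I would handle the interface arrows. Because the head of an interface arrow lies in block $i+1$, its $\phi$-image carries an extra factor of $q_i$ and involves Pl\"ucker coordinates $p^{i+1}_\nu$; the tail lies in block $i$ and contributes a ratio of $p^i$-coordinates. I would match each such term with one of the quantum summands in the expansion of $s^i_{\yng(1)}*s^i_\lambda$ for $\lambda$ a maximally wide or tall rectangle at the corner of block $i$; these are exactly the summands in $G^i_\lambda$ that involve $q_i$ and the coordinates $p^{i+1}_\nu$ described in \cite{kflags}. The combinatorics at the step of the ladder (how many horizontal versus vertical interface arrows appear, and which rectangular shapes they connect) should match this quantum Pieri-type expansion precisely.

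The main obstacle will be bookkeeping near the interfaces: the boundary vertices are shared between blocks in subtle ways, and the constant/linear correction piece appearing in Definition~\ref{def:mirror} is there precisely to absorb the discrepancy between the na\"ive sum over all $\rho$ blocks and the EHX sum over all arrows. Each ``trivial'' ratio of the form $p^i_{\yng(1)}/p^i_\emptyset$ coming from the bottom corner of block $i$ must be reconciled with its counterpart coming from Proposition~\ref{pro:isogr} applied to that block, and the correction term should encode exactly how many such duplicates appear and how the interface arrows reshape the canonical class contribution. Once this counting is verified block by block and a clean formula for $G^i_\lambda$ at maximal rectangles is in hand, the substitution $z_v \mapsto \phi(z_v)$ yields the desired identity between the EHX sum and the rectangles-cluster expansion of $W_P$.
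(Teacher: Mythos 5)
This theorem is stated in the paper as an import from \cite{kflags}; the present paper gives no proof of it, so the relevant comparison is with the proof in \cite{kflags}, whose architecture is the one you sketch: compute $F^i_\lambda$ explicitly for each frozen $\lambda\in M(r_{i-1},r_i)$ by quantum Schubert calculus, expand in the rectangles chart, and match the result arrow-by-arrow against the head-over-tails sum, invoking the Marsh--Rietsch identification (Proposition \ref{pro:isogr}) within each block. Your split of arrows into internal and interface ones, with internal arrows of block $i$ accounting for the classical parts of $G^i_\lambda/p^i_\lambda$ and interface arrows for the $q_i$-terms, is the correct organizing principle, and the uniform factor $q_1\cdots q_{i-1}$ does cancel on internal arrows as you say.

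However, as written the proposal has genuine gaps rather than being a complete proof. First, the decisive computation --- the closed form of $F^i_\lambda$ at the frozen rectangles, whose quantum parts are $q_i$ times products of the form $p^i_\mu p^{i+1}_\nu$ --- is deferred to a citation, and the actual bijection between those quantum summands and specific interface arrows is never exhibited; that bijection is the entire content of the theorem beyond Proposition \ref{pro:isogr}. Second, your initial picture of the interface is wrong in a way your later sentence only partially repairs: the single $q$-arrow of the standalone Grassmannian $\Gr(r_{i-1},r_i)$ is replaced not by one interface arrow but by several (the horizontal arrows into the bottom rows of block $i+1$ together with the arrow into the corner vertex labelled $q_1\cdots q_i$), and their number must be shown to equal the number of frozen $\lambda$ for which $F^i_\lambda$ acquires a quantum term --- for instance, for $\Fl(4;2,1)$ the two $q_1$-terms $q_1/p^1_{(2)}$ and $q_1 p^1_{(1)}p^2_{(1)}/\bigl(p^1_{(2,2)}p^2_\emptyset\bigr)$ correspond to the arrows into the vertices labelled $q_1$ and $q_1 p^2_{(1)}/p^2_\emptyset$ in Example \ref{eg:twostep-quiver}. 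Third, your handling of the correction term in Definition \ref{def:mirror} is explicitly conjectural (``should encode exactly how many such duplicates appear''); in the worked examples the match between the rectangles-chart expansion of $W_P$ and the arrow sum is exact, term for term, with no residual duplicates left to absorb, so this accounting must be proved rather than posited as a balancing device. In short: the strategy coincides with that of the cited proof, but the proposal stops where the real work begins --- the Pieri-type formula, the interface arrow count, and the correction-term bookkeeping are all asserted rather than verified.
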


\section{Quantum cohomology and mirrors of the flag variety}\label{sec:theoremA}
To summarize the situation for the Grassmannian, there are two mirrors -- the Pl\U cker coordinate mirror and the EHX mirror -- the first of which is isomorphic with the second in a particular cluster chart. Because the same partitions index Pl\U cker coordinates and Schubert classes, partial derivatives of the Pl\U cker coordinate mirror can easily be interpreted -- and indeed give -- quantum cohomology relations. 

Up until the last clause, the same is true for a multi-step flag variety: there are two mirrors -- the Pl\U cker coordinate mirror and the EHX mirror -- the first of which is isomorphic with the second in a particular cluster chart.  The same partitions index Pl\U cker coordinates and Schubert classes -- and indeed, the Abelian/non-Abelian basis of the cohomology as well. But consider the following example.

\begin{eg} The Pl\U cker coordinate mirror  of $\Fl(4;2,1)$ is
\[\frac{p^1_{\yng(1)}}{p^1_{\emptyset}}+\frac{p^1_{\yng(2,1)}+q_1}{p^1_{\yng(2)}}+\frac{p^1_{\yng(2,1)}}{p^1_{\yng(1,1)}}+\frac{q_1 p^1_{\yng(1)} p^2_{\yng(1)}}{p^1_{\yng(2,2)}}+\frac{p^2_{\yng(1)}}{p^2_{\emptyset}}+\frac{q_2}{p^2_{\yng(1)}}.\]
Expanding in the rectangles cluster and applying $p^2_{\yng(1)} \frac{\partial}{\partial p^2_{\yng(1)}}$, we obtain
\[\frac{q_1 p^1_{\yng(1)} p^2_{\yng(1)}}{p^1_{\yng(2,2)}}+p^2_{\yng(1)}-\frac{q_2}{p^2_{\yng(1)}}.\]
The most natural way to interpret this as a quantum cohomology relation is as:
\[\frac{q_1 s^1_{\yng(1)} s^2_{\yng(1)}}{s^1_{\yng(2,2)}}+s^2_{\yng(1)}-\frac{q_2}{s^2_{\yng(1)}}=0,\]
however, this relation does not hold. One could attempt to use Schubert classes instead, for example:
\[\frac{q_1 \sigma_{\yng(1),\yng(1)}}{\sigma_{\yng(2,2),\emptyset}}+\sigma_{\emptyset,\yng(1)}-\frac{q_2}{\sigma_{\emptyset,\yng(1)}}=0.\]
However, this relation also does not hold, and at any rate there will quickly be ambiguity with this approach with multi-step flag varieties.
\end{eg}
The above example demonstrates the central difficulty in the flag case: the Pl\U cker coordinate mirror is built out of quantum Schubert calculus, but is written in Pl\U cker coordinates which have the same multiplicative structure of the $s_{\vec{\mu}}$ basis. By multiplicative structure, we mean the property that the basis element associated to a tuple $(\lambda_1,\dots,\lambda_\rho)$ is the product of the $\rho$ basis elements given by tuples with a single non-empty partition $\lambda_i$ in the $i^{th}$ spot, as $i$ runs from $1$ to $\rho$.

For the flag variety, we must instead use the \emph{Schubert map}, which we introduce now. Fix a flag variety $\flag$,  where $\mathbf{r}:=r_1,\dots,r_\rho$ as usual. Recall that $P(n,\mathbf{r})$ is the set of Pl\U cker coordinates $p^i_\lambda$ on $Y(n,\mathbf{r})$, where $\lambda$ is a rectangle. Let $U_{P(n,\mathbf{r})}$ be the open subvariety of $Y(n,\mathbf{r})$ where the $p^i_\lambda, \lambda \in P(n,\mathbf{r})$ do not vanish. Let $\widetilde{QH}^*(\flag)$ denote the localization of the quantum cohomology ring at the rectangular Schubert classes.

The ring of functions $\CC[U_{P(n,\mathbf{r})}]$ is generated (as an algebra) by $P(n,\mathbf{r})$, as every Pl\U cker coordinate can be written as a Laurent polynomial in the rectangular Pl\U cker coordinates using three term Pl\U cker relations. We extend the coefficient field to the ring $R=\CC[q_1,\dots,q_\rho]$ We define a map 
\[F: R[U_{P(n,\mathbf{r})}] \to \widetilde{QH}^*(\flag)\]
-- a morphism of $\CC[q_1,\dots,q_\rho]$ algebras -- by setting the images of the rectangular Pl\U cker coordinates. 

Fix some $p^i_{j \times k}$, where the rectangle $j \times k$ is an element of $P(r_{i-1},r_i)$. We define two tuples of partitions. For $l=1,\dots,{i-1}$, let $R_l$ be the $(j-k+r_{i-1}-r_{i})\times (r_{l-1}-r_{l})$ rectangle, and set $R_i:= j \times k$. Set $\vec{\mu}_1:=(R_1,\dots,R_i,\emptyset,\dots,\emptyset)$ and  $\vec{\mu}_2:=(R_1,\dots,R_{i-1},\emptyset,\emptyset,\dots,\emptyset)$. 
\begin{mydef}\label{def:Schubertmap}The \emph{Schubert map} 
\[F: \CC[U_{P(n,\mathbf{r})}][q_1,\dots,q_\rho] \to \widetilde{QH}^*(\flag)\]
is defined by setting 
\[F(p^i_{j \times k})=\frac{\sigma_{\vec{\mu}_1}}{\sigma_{\vec{\mu}_2}}.\]
\end{mydef}
\begin{rem} Note that the Schubert map in the Grassmannian case agrees with the map defined in Theorem \ref{thm:thmMR}. 
\end{rem}
The Schubert map allows partial derivatives of the Pl\U cker coordinate mirror to be interpreted as quantum relations. We are now ready to prove Theorem \ref{thm:thmA} as stated in the introduction, which we restate here. 
\begin{theoremrepeat} $C=(C_1,\dots,C_\rho)$  be a choice of clusters for each Grassmannian factor in $Y$, and let $W_C$ be the expansion of the Pl\U cker coordinate mirror in this chart. For all $i$ and $p^i_\lambda \in C_i$, 
\[F\left(\frac{\partial}{\partial p^i_{\lambda}} W_{C}\right)=0.\]
\end{theoremrepeat}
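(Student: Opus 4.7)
The strategy is to exploit the interpolation property of the Schubert map advertised in the introduction: $F$ is designed so that $F(W_P)$ can be identified, term by term, with (the pushforward of) the Pl\"ucker coordinate mirror for a sufficiently large ambient Grassmannian $\Gr(N,r_1)$, after which Marsh--Rietsch's Grassmannian statement (Theorem \ref{thm:thmMR}) can be invoked.

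First I would prove the conclusion is independent of the cluster chart, so that it suffices to verify it in the product of rectangles clusters. Any mutation within a Grassmannian factor comes from a three-term Pl\"ucker relation, so the chain rule expresses $\partial_{p^i_\lambda} W_C$ in a neighboring cluster $C'$ as a $\CC[q]$-linear combination (with coefficients that are Laurent monomials in Pl\"ucker coordinates) of $\partial_{p^j_\nu} W_{C'}$. Once one checks that $F$ is genuinely well-defined, i.e.\ that it sends every three-term Pl\"ucker relation in $R[U_{P(n,\mathbf{r})}]$ to a valid identity in $\widetilde{\QH}^*\flag$, applying $F$ to both sides reduces Theorem A to a single chart. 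The required Schubert identities among the ratios $\sigma_{\vec{\mu}_1}/\sigma_{\vec{\mu}_2}$ should follow from the determinantal formulas \eqref{eq:q-determinant}--\eqref{eq:si}.

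Second, working in the rectangles cluster I would use Theorem \ref{thm:flagMR} to match $W_C$ with the labelled EHX-style expression of Example \ref{eg:labels}, and then rewrite $F$ of each vertex variable via Theorem B. A typical rectangles-chart vertex variable has the form $(q_1\cdots q_{i-1})\,p^i_{j\times k}/p^i_{(j-1)\times(k-1)}$, which under $F$ becomes a ratio of Schubert classes in $\QH^*\flag$. Part (a) of Theorem B identifies such $q$-scaled ratios with images $s^1_\lambda$ of ordinary Schur polynomials under the ring map $\Lambda_{r_1}\to\QH^*\flag$, for partitions $\lambda$ built by attaching a $q$-hook onto a tuple $\vec{\mu}$; part (b) controls the overflow terms that do not correspond to honest vertices by forcing them to vanish. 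Consequently $F$ applied to the rectangles-chart expression of $W_P$ coincides with the Pl\"ucker/EHX mirror of $\Gr(N,r_1)$ for $N\gg 0$ after substituting Grassmannian Schubert classes $s_\lambda$ by their images $s^1_\lambda\in\QH^*\flag$ under the Chern-root ring homomorphism.

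Third, Marsh--Rietsch (Theorem \ref{thm:thmMR}) already guarantees vanishing of the corresponding partial derivatives in $\QH^*\Gr(N,r_1)$. Because the identification in the previous step is compatible with the homomorphism $\Lambda_{r_1}\to\QH^*\flag$, the vanishing descends, giving $F(\partial_{p^i_\lambda}W_C)=0$. The main obstacle will be the interpolation step: a Pl\"ucker coordinate $p^i_{j\times k}$ on $Y$ is sent by $F$ to a ratio of Schubert classes of $\flag$ indexed by tuples with several non-empty entries, and it is not transparent from Definition \ref{def:Schubertmap} that these ratios assemble into single $s^1_\lambda$'s; Theorem B is precisely the combinatorial bridge that collapses tuples into single $q$-hook partitions and so is doing the crucial work. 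A secondary technical difficulty is the cluster-invariance check in the first step, which requires verifying a family of quantum Schubert identities after applying $F$ to three-term Pl\"ucker relations; this is expected to be routine via the determinantal framework, but needs care to handle the $q$-corrections.
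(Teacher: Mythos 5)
Your proposal is correct and follows essentially the same route as the paper: the chain-rule reduction to the rectangles chart is Proposition \ref{pro:reduction}, the identification of $F$ applied to the rectangles-chart vertex labels with the mirror of $\Gr(N,r_1)$ for $N\gg 0$ via Theorem \ref{thm:thmB}(a) is exactly Proposition \ref{pro:commutes}, and the vanishing of the border ``overflow'' terms via Theorem \ref{thm:thmB}(b) followed by the appeal to Theorem \ref{thm:thmMR} is likewise the paper's argument. The only deviation is immaterial: your proposed well-definedness check of $F$ on three-term Pl\"ucker relations is automatic, since the rectangular coordinates are algebraically independent (they form a cluster) and $F$ is defined on $\CC[U_{P(n,\mathbf{r})}]$, where those relations already hold identically.
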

To show this theorem will require two propositions.
\begin{prop}\label{pro:reduction} Let $C=(C_1,\dots,C_\rho)$ and $C'=(C'_1,\dots,C'_\rho)$ be two choices of clusters for $Y$ connected by a mutation. Let $W_C$ and $W_{C'}$ be the expansions of $W$ in $C$ and $C'$ respectively. Suppose Theorem \ref{thm:thmA} holds for $C$. Then it holds for $C'$.
\end{prop}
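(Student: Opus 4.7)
The plan is to reduce Theorem \ref{thm:thmA} for the cluster $C'$ to the hypothesis for $C$ via the chain rule applied to the mutation relation. A single mutation replaces one Pl\U cker coordinate $p := p^i_\lambda \in C$ by another $p' := p^i_\mu \in C'$, related by a three-term Pl\U cker relation $p\,p' = A + B$, where $A$ and $B$ are products of Pl\U cker coordinates lying in $C \cap C'$. All other coordinates of the two clusters coincide; call them $z_1, \ldots, z_{N-1}$. Since $W_P$ is an intrinsically defined rational function on $U_{P(n,\mathbf{r})}$, its expression $W_{C'}$ is obtained from $W_C$ by the substitution $p \mapsto (A+B)/p'$, which is an identity of rational functions on $U_{P(n,\mathbf{r})}$.

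First I would apply the ordinary chain rule to this substitution. For each shared variable $z_j$,
\[ \frac{\partial W_{C'}}{\partial z_j} \;=\; \frac{\partial W_C}{\partial z_j} \;+\; \frac{\partial W_C}{\partial p}\cdot\frac{\partial}{\partial z_j}\!\left(\frac{A+B}{p'}\right), \]
while for the new variable,
\[ \frac{\partial W_{C'}}{\partial p'} \;=\; -\frac{A+B}{(p')^2}\cdot\frac{\partial W_C}{\partial p}. \]
These are equalities in the function field of $U_{P(n,\mathbf{r})}$, with the partials of $W_C$ understood as evaluated after the substitution $p = (A+B)/p'$.

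Next I would apply the Schubert map $F$, which is a $\CC[q_1,\ldots,q_\rho]$-algebra homomorphism and extends to the localization in which the relevant denominators (frozen rectangular coordinates and $p'$) are invertible. By hypothesis, $F(\partial W_C/\partial v) = 0$ for every $v \in C$; in particular $F(\partial W_C/\partial z_j) = 0$ and $F(\partial W_C/\partial p) = 0$. Plugging these into the two identities above yields $F(\partial W_{C'}/\partial z_j) = 0$ and $F(\partial W_{C'}/\partial p') = 0$. Since every variable of $C'$ is either a shared $z_j$ or the new $p'$, this is Theorem \ref{thm:thmA} for $C'$.

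The main subtlety, and essentially the only real content of the proof, is the bookkeeping: one must verify that the chain rule identities are genuine equalities of rational functions on $U_{P(n,\mathbf{r})}$ (so that applying the ring map $F$ is meaningful), and that $F$ is well defined on the localization required to invert $p'$ and all frozen coordinates appearing in denominators after the substitution. The first point follows from working throughout in the function field, where $pp' = A+B$ holds identically. The second follows from the description of $F$ on rectangular Pl\U cker coordinates in Definition \ref{def:Schubertmap}, since all of those map to invertible Schubert-class ratios in $\widetilde{\QH}^*(\flag)$; any non-rectangular Pl\U cker coordinate is then expressible as a rational function in rectangular ones through iterated three-term Pl\U cker relations, so $F(p')$ is automatically invertible as well. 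Once this setup is in place, the proposition is an exercise in the chain rule combined with the ring-homomorphism property of $F$.
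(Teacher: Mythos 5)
Your proposal is correct and takes essentially the same route as the paper's own proof: both apply the multivariable chain rule to the substitution $p^i_\lambda \mapsto (p^i_a p^i_b + p^i_c p^i_d)/p^i_\mu$ coming from the three-term relation, observe that this substitution is the identity in the domain of $F$ (so $F$ of the substituted partials of $W_C$ equals $F$ of the original partials), and conclude that every term vanishes by the hypothesis for $C$ since $F$ is a $\CC[q_1,\dots,q_\rho]$-algebra homomorphism. The localization bookkeeping you spell out is left implicit in the paper, where it is built into the definition of $F$ on $\CC[U_{P(n,\mathbf{r})}][q_1,\dots,q_\rho]$.
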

\begin{proof} For some $i=1,\dots,\rho$ there is a $\lambda, \mu, a, b, c, d \in P(r_{i-1},r_i)$ such that $C_i'$ is obtain from $C_i$ via the three term Pl\U cker relation
\[p^i_\lambda p^i_\mu = p^i_a p^i_b + p^i_c p^i_d\]
That is, $p^i_\lambda \in C_i$ and $p^i_\mu \in C_i'$, and $p^i_a,p^i_b, p^i_c$ and $p^i_d$ are elements of both $C_i$ and $C_i'$. The Laurent polynomial $W_{C'}$ is obtained from $W_C$ by replacing $p^i_\lambda$ with 
\[\frac{ p^i_a p^i_b + p^i_c p^i_d}{p^i_\mu}.\]
Note that by construction, 
\[ F(p^i_\lambda)=F\left(\frac{ p^i_a p^i_b + p^i_c p^i_d}{p^i_\mu}\right).\]

For any $ C_i'$, we can then compute 
using the multi-variable chain rule that
\[\frac{\partial}{\partial p^i_\alpha} W_{C'}=\frac{\partial}{\partial p^i_\alpha} \left(\frac{ p^i_a p^i_b + p^i_c p^i_d}{p^i_\mu}\right) \frac{\partial}{\partial p^i_\lambda} W_{C}|_{p^i_\lambda=\frac{ p^i_a p^i_b + p^i_c p^i_d}{p^i_\mu}}+\frac{\partial}{\partial p^i_\alpha} W_{C}|_{p^i_\lambda=\frac{ p^i_a p^i_b + p^i_c p^i_d}{p^i_\mu}}.\]
It follows that 
\[F\left(\frac{\partial}{\partial p^i_\alpha} W_{C'}\right)=0,\]
as 
\[F\left( \frac{\partial}{\partial p^i_\lambda} W_{C}|_{p^i_\lambda=\frac{ p^i_a p^i_b + p^i_c p^i_d}{p^i_\mu}}\right)=F\left( \frac{\partial}{\partial p^i_\lambda} W_{C}\right)=0\]
and
\[F\left( \frac{\partial}{\partial p^i_\alpha} W_{C}|_{p^i_\lambda=\frac{ p^i_a p^i_b + p^i_c p^i_d}{p^i_\mu}}\right)=F\left( \frac{\partial}{\partial p^i_\alpha} W_{C}\right)=0.\]
\end{proof}
The implication of this proposition is that we can reduce Theorem \ref{thm:thmA} to the statement for a single cluster, the rectangles cluster. The next proposition is the main ingredient in the proof of Theorem \ref{thm:thmA}, and is a corollary of the second theorem proved in this paper. This proposition uses the fact that the ladder diagram of a flag variety $\flag$ can be viewed naturally as a subquiver of the ladder diagram of a Grassmannian $\Gr(N,r_1)$, where $N>>0$ (or we can think of $\Gr(\infty,r_1)$ if we wish). 

For example, below, the ladder diagram of the flag variety $\Fl(5,3,2,1)$ is superimposed on that of $\Gr(\infty,3)$ (the second is drawn dashed in grey):
\[\begin{tikzpicture}[scale=0.6]
\draw (0,0) rectangle (1,1);
\draw (1,1) rectangle (2,2);
\draw (0,1) rectangle (1,2);
\draw (1,0) rectangle (2,1);
\draw (2,1) rectangle (3,2);
\draw (0,2) rectangle (1,3);
\draw (1,2) rectangle (2,3);
\draw (2,0) rectangle (3,1);
\draw (3,0) rectangle (4,1);

\draw[gray,dashed] (2,2) rectangle (3,3);
\draw[gray,dashed] (3,1) rectangle (4,2);
\draw[gray,dashed] (3,2) rectangle (4,3);
\draw[gray,dashed] (4,0) rectangle (5,1);
\draw[gray,dashed] (4,1) rectangle (5,2);
\draw[gray,dashed] (4,2) rectangle (5,3);
\draw[gray,dashed] (5,0) rectangle (6,1);
\draw[gray,dashed] (5,1) rectangle (6,2);
\draw[gray,dashed] (5,2) rectangle (6,3);
\draw[gray,dashed] (5,3)--(8,3);
\draw[gray,dashed] (5,2)--(8,2);
\draw[gray,dashed] (5,1)--(8,1);
\draw[gray,dashed] (5,0)--(8,0);

\end{tikzpicture}.\]

We now have two $\phi$ maps, as defined in Theorem \ref{thm:flagMR}, both with domain $\CC[z_v]$, where $v$ ranges over the vertices of the dual ladder quiver of the flag variety. Let $\phi_{\Fl}:\CC[z_v] \to \CC[U_{P(n,\mathbf{r})}]$ denote the homomorphism obtained by viewing vertices as vertices in the flag quiver. If we view a vertex as a vertex of a Grassmannian quiver, then we obtain a map  $\phi_{\Gr}$ from $\CC[z_v]$ to a localization of the coordinate ring of $\Gr(\infty, r_1)$.  More precisely, this is just the ring generated by minors of the infinite matrix
\[ \begin{bmatrix}
x_{1 1} & x_{12} & x_{13} & x_{14} & \cdots \\
x_{2 1} & x_{12} & x_{13} & x_{14} & \cdots \\
\vdots & & & \vdots \\
x_{r_1 1} & x_{r_1 2} & x_{r_1 3} & x_{r_1 4} & \cdots \\

\end{bmatrix} 
,\]
 which we can index by all partitions of length at most $r$, localized at the rectangular partitions appearing in the flag quiver.  Abusing notation, we call this ring $\CC[U_{P(\infty,r_1)}].$ By taking limits, we can see that there is a well-defined map from the ring of minors of the infinite matrix above to the symmetric polynomial ring in $r_1$ variables, $\Lambda_{r_1}$, given by
 \[ p_\lambda \mapsto s_\lambda.\]
Let $\Lambda_{r_1}^\circ$ be the localization at the rectangular coordinates. The map above gives rise to a natural generalization of the Schubert map 
\[F_{Gr}: \CC[U_{P(\infty,r_1)}] \to \Lambda_{r_1}^\circ.\]

We also have the Schubert map for the flag variety:
 \[F_{\Fl}: \CC[U_{P(n,\mathbf{r})}][q_1,\dots,q_\rho] \to \widetilde{QH}^*(\flag).\]

\begin{prop}\label{pro:commutes} Consider the natural map
\[\pi:\Lambda_{r_1}^\circ \to \widetilde{QH}^*(\flag), \hspace{5mm} s_\lambda \mapsto s^1_\lambda\]
  discussed in the introduction and in \eqref{eq:si}. Then the following diagram commutes. 

\[ \begin{tikzcd}
{} &  \CC[U_{P(\infty,r_1)}] \arrow{r}{F_{\Gr}} &\Lambda_{r_1}^\circ  \arrow{dd}{\pi}& {} \\
\CC[z_v]  \arrow{ru}{\phi_{\Gr}} \arrow[swap]{rd}{\phi_{\Fl}} & {} & {}\\
{}&\CC[U_{P(n,\mathbf{r})}][q_1,\dots,q_\rho]  \arrow{r}{F_{\Fl}}&  \widetilde{QH}^*(\flag)
\end{tikzcd}
\]

\end{prop}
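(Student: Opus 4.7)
All four maps in the diagram are homomorphisms of $\CC[q_1,\dots,q_\rho]$-algebras (extending $F_{\Gr}$ trivially by the identity on scalars). Therefore it suffices to verify the equality $\pi\circ F_{\Gr}\circ \phi_{\Gr} = F_{\Fl}\circ \phi_{\Fl}$ on each generator $z_v$, where $v$ ranges over the vertices of the dual ladder quiver of $\flag$. The plan is to compute both sides for each vertex and compare using Theorem \ref{thm:thmB}.

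Fix a vertex $v$ lying in block $i$ at interior position $(a,b)$, where $a$ is the row index from the top within the block and $b$ is the column index from the left within the block. By the definitions of $\phi_{\Fl}$ and $\phi_{\Gr}$ (cf.\ Example \ref{eg:labels} and Proposition \ref{pro:isogr}), one computes
\[
\phi_{\Fl}(z_v)=q_1\cdots q_{i-1}\cdot\frac{p^i_{a\times b}}{p^i_{(a-1)\times(b-1)}},\qquad \phi_{\Gr}(z_v)=\frac{p_{I\times J}}{p_{(I-1)\times(J-1)}},
\]
with $I:=r_1-r_i+a$ and $J:=n-r_{i-1}+b$. Applying $F_{\Fl}$ to the first expression using Definition \ref{def:Schubertmap}, and observing that the auxiliary rectangles $R_l$ for $l<i$ depend only on the difference $a-b$ (hence are identical for $(a,b)$ and $(a-1,b-1)$), the flag side reduces to
\[
q_1\cdots q_{i-1}\cdot\frac{\sigma_{(R'_1,\dots,R'_{i-1},a\times b,\emptyset,\dots)}}{\sigma_{(R'_1,\dots,R'_{i-1},(a-1)\times(b-1),\emptyset,\dots)}},
\]
where $R'_l=(a-b+r_{i-1}-r_i)\times(r_{l-1}-r_l)$. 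Applying $\pi\circ F_{\Gr}$ to the second expression gives $s^1_{I\times J}/s^1_{(I-1)\times(J-1)}$.

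To match these two expressions, invoke Theorem \ref{thm:thmB}(a) for both $s^1_{I\times J}$ and $s^1_{(I-1)\times(J-1)}$; the hypothesis that the $q$-hook fits is a straightforward height check using $I\geq r_1-r_i$ (the height of $H_J$). The quantum-monomial ratio $q^{H_J}/q^{H_{J-1}}$ is computed directly and equals $q_1\cdots q_{i-1}$; one handles separately the generic case $b\geq 2$ (where $J$ and $J-1$ lie in the same block-interval $(n-r_L,n-r_{L+1}]$ with $L=i-1$) and the transitional case $b=1$ (where $J-1$ slips into the previous block interval with $L=i-2$, and telescoping of exponents yields the same answer). For the partition tuples $\vec{\mu}(I\times J)$ and $\vec{\mu}((I-1)\times(J-1))$ supplied by Theorem \ref{thm:thmB}, analyze the skew shape $I\times J/H_J$: it has a two-level staircase structure consisting of a top rectangle of height $r_{i-1}-r_i-b$ and width $n-r_{i-1}$, and a bottom rectangle of height $a$ and width $J$. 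Subdividing this skew shape right-to-left into vertical strips of widths $n-r_1,\,r_1-r_2,\dots,r_{i-1}-r_i$ per Definition \ref{def:mu-lambda} shows that the first $i-1$ strips lie within the $n-r_{i-1}$-wide portion and are rectangles of height $a-b+r_{i-1}-r_i$, i.e.\ precisely the $R'_l$; the final strip of width $b$ lies inside the bottom rectangle and equals $a\times b$. Therefore $\vec{\mu}(I\times J)=(R'_1,\dots,R'_{i-1},a\times b,\emptyset,\dots)$, and the identical analysis with $(a-1,b-1)$ in place of $(a,b)$ yields $\vec{\mu}((I-1)\times(J-1))=(R'_1,\dots,R'_{i-1},(a-1)\times(b-1),\emptyset,\dots)$. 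Combined with the ratio of $q$-monomials, this identifies the two sides.

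It remains to handle the exceptional vertices: the top-left vertex (labeled $1$ on both sides, hence trivial), the inner-corner vertices between blocks $i-1$ and $i$ (labeled $q_1\cdots q_{i-1}$), and the bottom-right vertex (labeled $q_1\cdots q_\rho$). Each of these may be treated by setting $(a,b)$ equal to the appropriate boundary value (e.g.\ $(0,1)$ for an inner corner, $(r_\rho,r_{\rho-1}-r_\rho+1)$ viewed as a degenerate block-$(\rho+1)$ position for the bottom-right corner) and running the same computation; in these cases the skew shape degenerates to a single rectangle with no bottom-level portion and the resulting $\vec{\mu}$ has $\mu^i=\emptyset$, while the $q$-ratios again telescope to $q_1\cdots q_{i-1}$ (respectively $q_1\cdots q_\rho$). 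The main obstacle in the proof is the combinatorial identification of $\vec{\mu}(I\times J)$ with $(R'_1,\dots,R'_{i-1},a\times b,\emptyset,\dots)$ from Definition \ref{def:mu-lambda}; the crucial simplifying observation is that $I-J$ is preserved under the shift $(a,b)\mapsto(a-1,b-1)$, which forces the first $i-1$ entries of $\vec{\mu}$ to coincide, so that the entire ratio collapses as required.
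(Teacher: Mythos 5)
Your proposal is correct and follows essentially the same route as the paper's proof: reduce to generators $z_v$, compute both labels explicitly (your $(I,J)=(r_1-r_i+a,\,n-r_{i-1}+b)$ matches the paper's row $r_1-r_{I+1}+j$, column $n-r_I+k$), apply Theorem \ref{thm:thmB}(a) to the two Grassmannian rectangles, identify the associated tuples with the Schubert-map rectangles via the decomposition of the skew shape $I\times J/H_J$ (the paper's Figure \ref{fig:comparehook}), and match the $q$-monomials using \eqref{eq:qpower}. Your explicit treatment of the transitional case $b=1$ and of the corner vertices is a welcome addition that the paper leaves implicit, and your observation that the auxiliary rectangles $R_l$ depend only on $a-b$ is exactly the cancellation $\sigma_{\vec{\mu}_2}/\sigma_{\vec{\mu}_2}$ used in the paper.
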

\begin{eg} \label{eg:twostep-quiver}
Consider the labeled dual ladder quiver for the flag variety $\Fl(4;2,1)$:
\[\begin{tikzpicture}[scale=1.6]
\draw (0,0) rectangle (1,1);
\draw (1,1) rectangle (2,2);
\draw (0,1) rectangle (1,2);
\draw (1,0) rectangle (2,1);
\draw (2,0) rectangle (3,1);

\draw[gray,dashed] (3,2)--(3,1);
\draw[gray,dashed] (4,2)--(4,0);

\draw[gray,dashed] (2,2)--(6,2);
\draw[gray,dashed] (3,1)--(6,1);
\draw[gray,dashed] (3,0)--(6,0);

\node at (1.5,1.5) (d) {$\frac{p^1_{\yng(2)}}{p^1_\emptyset}$};

\node at (0.5,0.5) (a) {$\frac{p^1_{\yng(1,1)}}{p^1_\emptyset}$};
\node at (1.5,0.5) (b) {$\frac{p^1_{\yng(2,2)}}{p^1_{\yng(1)}}$};
\node at (2.5,0.5) (c) {$\frac{q_1  p^2_{\yng(1)}}{p^2_\emptyset}$};
\node at (0.5,1.5) (f) {$\frac{p^1_{\yng(1)}}{p^1_\emptyset}$};
\node at (2.5,1.5) (h) {$q_1$};
\node at (3.5,0.5) (i) {$q_1 q_2$};

\node at (0.5,2.5) (g) {1};

\draw[->] (g)--(f);
\draw[->] (f)--(a);
\draw[->] (f)--(d);
\draw[->] (d)--(b);
\draw[->] (a)--(b);
\draw[->] (d)--(h);
\draw[->] (b)--(c);
\draw[->] (h)--(c);
\draw[->] (c)--(i);

\end{tikzpicture}.\]

The Grassmannian labels are given by:
\[\begin{tikzpicture}[scale=1.6]
\draw (0,0) rectangle (1,1);
\draw (1,1) rectangle (2,2);
\draw (0,1) rectangle (1,2);
\draw (1,0) rectangle (2,1);
\draw (2,0) rectangle (3,1);

\draw[gray,dashed] (3,2)--(3,1);
\draw[gray,dashed] (4,2)--(4,0);

\draw[gray,dashed] (2,2)--(6,2);
\draw[gray,dashed] (3,1)--(6,1);
\draw[gray,dashed] (3,0)--(6,0);

\node at (1.5,1.5) (d) {$\frac{p^1_{\yng(2)}}{p^1_\emptyset}$};

\node at (0.5,0.5) (a) {$\frac{p^1_{\yng(1,1)}}{p^1_\emptyset}$};
\node at (1.5,0.5) (b) {$\frac{p^1_{\yng(2,2)}}{p^1_{\yng(1)}}$};
\node at (2.5,0.5) (c) {$\frac{p^1_{\yng(3,3)}}{p^1_{\yng(2)}}$};
\node at (0.5,1.5) (f) {$\frac{p^1_{\yng(1)}}{p^1_\emptyset}$};
\node at (2.5,1.5) (h) {$\frac{p^1_{\yng(3)}}{p^1_\emptyset}$};
\node at (3.5,0.5) (i) {$\frac{p^1_{\yng(4,4)}}{p^1_{\yng(3)}}$};

\node at (0.5,2.5) (g) {1};

\draw[->] (g)--(f);
\draw[->] (f)--(a);
\draw[->] (f)--(d);
\draw[->] (d)--(b);
\draw[->] (a)--(b);
\draw[->] (d)--(h);
\draw[->] (b)--(c);
\draw[->] (h)--(c);
\draw[->] (c)--(i);

\end{tikzpicture}.\]
Proposition \ref{pro:commutes} states that if we apply the Schubert map to the Grassmannian labels and then apply $\pi$, we obtain the same cohomology class as applying the Schubert map to the flag labels. This is trivially true for the labels in the first block.  Consider the vertex labeled $p_{\yng(3)}/p_\emptyset$. One can check using Theorem \ref{thm:thmB} (see Example \ref{eg:twostep-thm}) that
\[\pi\left(F_{\Gr}\left(\frac{p_{\yng(3)}}{p_\emptyset}\right)\right)=\frac{s^1_{\yng(3)}}{s^1_\emptyset}=q_1,\]
which is indeed the image under $F_{\Fl}$ of the label corresponding to the same vertex in the flag diagram.
Similarly, from Example \ref{eg:twostep-thm}, we also have
\[\pi\left(F_{\Gr}\left(\frac{p_{\yng(3,3)}}{p_{\yng(2)}}\right)\right)=\frac{s^1_{\yng(3,3)}}{s^1_{\yng(2)}}=\frac{q_1 \sigma_{\yng(2),\yng(1)}}{\sigma_{\yng(2),\emptyset}}=F_{\Fl}\left( \frac{q_1  p^2_{\yng(1)}}{p^2_\emptyset}\right),\]
and
\[\pi\left(F_{\Gr}\left(\frac{p_{\yng(4,4)}}{p_{\yng(3)}}\right)\right)=\frac{s^1_{\yng(4,4)}}{s^1_{\yng(3)}}=\frac{q_1^2 q_2}{q_1}=q_1 q_2=F_{\Fl}(q_1 q_2).\]

\end{eg}

To summarize, the ladder diagram of any flag variety is a subquiver of the ladder diagram of a sufficiently large Grassmannian. Using this inclusion of ladder diagrams, we can induce an inclusion of dual ladder quivers. For the Grassmannian, Theorem \ref{thm:thmMR} gives a map from vertices of the Grassmannian ladder quiver to the cohomology of the Grassmannian.  Theorem \ref{thm:flagMR} together with the Schubert map gives a map from vertices of the flag variety to the quantum cohomology of the flag variety. There is a natural map from the cohomology of the Grassmannian to the flag variety. Proposition \ref{pro:commutes} states that the Schubert map is precisely the map that makes this diagram commute. We'll delay the proof of Proposition \ref{pro:commutes} to the next section, where it will be an easy corollary of Theorem \ref{thm:thmB}.

\begin{proof}[Proof of Theorem \ref{thm:thmA}]
By Proposition \ref{pro:reduction}, it suffices to show that for $C=(C_1,\dots,C_\rho)$ the rectangles cluster, and for all $i$ and $p^i_\lambda \in C_i$, 
\[F\left(\frac{\partial}{\partial p^i_{\lambda}} W_{C}\right)=0.\]
Recall that Theorem \ref{thm:flagMR} implies that  $W_C$ can be computed using the dual ladder quiver, together with the labels as in Example \ref{eg:labels}: that is,
\begin{equation}\label{eqn:hot} W_C=\sum_{a} \frac{L(v_{t(a)})}{L(v_{s(a)})}
\end{equation}
where $a$ ranges over the arrows in the quiver, $v_{s(a)}$ and $v_{t(a)}$ are the vertices that are the source and target of the arrow $a$, and $L(v_{s(a)})$ and $L(v_{t(a)})$ the labels of these vertices. 

Fixing a rectangle $j \times k$, in either the Grassmannian or the flag case, the partial derivative $p^i_{j \times k} \frac{\partial}{\partial p^i_{j \times k}}$ can be computed using the ladder diagram as well just as in \eqref{eqn:hot}. In this case, it is a signed sum involving the arrows with source or target at one of the two vertices where $p^i_{j \times k}$ appears in the numerator or denominator of the label. That is, the sum is  over the following eight arrows, and it is a signed sum -- red arrows have a negative sign and black arrows a positive sign:
\begin{equation}\label{pic1}
\begin{tikzpicture}[scale=1.6]
\node at (0,0) (a) {$\frac{p^i_{j \times k}}{p^i_{(j-1) \times (k-1)}}$};
\node at (1,-1) (b) {$\frac{p^i_{(j+1) \times (k+1)}}{p^i_{j \times k}}$};
\node at (0,-1) (c) {};
\node at (1,0) (d) {};
\node at (0,1) (e) {};
\node at (-1,0) (f) {};
\node at (2,-1) (g) {};
\node at (1,-2) (h) {};
\draw[<-,red] (c)--(a);
\draw[<-,red] (d)--(a);
\draw[<-] (a)--(e);
\draw[<-] (a)--(f);
\draw[<-] (g)--(b);
\draw[<-] (h)--(b);
\draw[->,red] (c)--(b);
\draw[->,red] (d)--(b);
\end{tikzpicture}
\end{equation}
If a vertex is on the border of the diagram, some arrows do not appear. For example, a variable of the form $p^i_{r_i \times k}$ appears in the label of only one vertex, and that vertex is the in the bottom row. In this case, the diagram is simply
\begin{equation*}
\begin{tikzpicture}[scale=1.6]
\node at (0,0) (a) {$\frac{p^i_{j \times k}}{p^i_{(j-1) \times (k-1)}}$};
\node at (1,0) (d) {};
\node at (0,1) (e) {};
\node at (-1,0) (f) {};
\draw[<-,red] (d)--(a);
\draw[<-] (a)--(e);
\draw[<-] (a)--(f);
\end{tikzpicture}
\end{equation*}

Notice that if we consider two variables $p^i_{j \times k}$ and $p^i_{(j+1) \times (k+1)}$, the arrows involved overlap, and therefore the corresponding equations share half their terms in common. By starting with a variable of the form $p^i_{r_i \times k}$ and then consider consecutive variables 
\[p_{r_1 \times k}, p_{(r_1-1) \times (k-1)}, p_{(r_1-2) \times (k-2)}, \dots\]
for some $k$, we can easily see the partial derivatives vanish under the Schubert map if and only if diagrams of the following form vanish:
\begin{equation}\label{pic2}
\begin{tikzpicture}[scale=1.4]
\node at (0,0) (a) {$\frac{p^i_{j \times k}}{p^i_{(j-1) \times (k-1)}}$};
\node at (0,-1) (c) {};
\node at (1,0) (d) {};
\node at (0,1) (e) {};
\node at (-1,0) (f) {};
\draw[<-,red] (c)--(a);
\draw[<-,red] (d)--(a);
\draw[<-] (a)--(e);
\draw[<-] (a)--(f);
\end{tikzpicture}.
\end{equation}
Again, some arrows may not appear depending on the position of the middle vertex in the quiver. 

To summarize, it suffices to show for every internal vertex in the dual ladder quiver of the flag variety, the equation arising from \eqref{pic2} vanishes under the Schubert map. Let $E_{\Fl}$ be such an equation for a fixed vertex $v$. Let $E_{\Gr}$ be the corresponding equation for the Grassmannian for the same vertex.  By Theorem \ref{thm:thmMR}, 
\[F_{\Gr}(E_{\Gr})=0.\]Our claim is that 
\[0=\pi(F_{\Gr}(E_{\Gr}))=F_{\Fl}(E_{\Fl}). \]
If the arrows with source or target at $v$ as a vertex in the Grassmannian quiver are also arrows in the ladder quiver, this follows immediately from Proposition \ref{pro:commutes}.  For some vertices along the border, however, there may be extra arrows in the Grassmannian quiver that contribute extra terms to $E_{\Gr}$. For example, the vertex in the red box is such a vertex in the following diagram:
\[\begin{tikzpicture}[scale=0.6]
\draw (0,0) rectangle (1,1);
\draw (1,1) rectangle (2,2);
\draw (0,1) rectangle (1,2);
\draw (1,0) rectangle (2,1);
\draw (0,2) rectangle (1,3);
\draw (1,2) rectangle (2,3);
\draw (2,0) rectangle (3,1);
\draw[red] (3,0) rectangle (4,1);

\draw[gray,dashed] (2,2) rectangle (3,3);
\draw[gray,dashed] (3,1) rectangle (4,2);
\draw[gray,dashed] (3,2) rectangle (4,3);
\draw[gray,dashed] (4,0) rectangle (5,1);
\draw[gray,dashed] (4,1) rectangle (5,2);
\draw[gray,dashed] (4,2) rectangle (5,3);
\draw[gray,dashed] (5,0) rectangle (6,1);
\draw[gray,dashed] (5,1) rectangle (6,2);
\draw[gray,dashed] (5,2) rectangle (6,3);
\draw[gray,dashed] (5,3)--(8,3);
\draw[gray,dashed] (5,2)--(8,2);
\draw[gray,dashed] (5,1)--(8,1);
\draw[gray,dashed] (5,0)--(8,0);

\end{tikzpicture}.\]
We claim, however, that these extra terms vanish under the Schubert map, and so the above equation still holds. In the example above, the extra term in $E_{Gr}$ comes from the vertical arrow into the red box, and is
\[ \frac{p_{\yng(4,4,4)} p_{\yng(3)}}{p_{\yng(3,3)} p_{\yng(4,4)}}.\]
Note that $\pi(F_{\Gr}(p_{\yng(3)}))=s^1_{\yng(3)}=0,$ so the whole term vanishes as required. 

In general, these extra arrows come in two forms: vertical arrows along the top of a step in the ladder diagram and horizontal arrows along the side.  Fixing a block or step $i \geq 1$ of the quiver, vertical arrows contribute the factor below to an extra term:
\[\frac{p_{(r_1-r_{i+1}-1) \times (n-r_{i}+k) }}{p_{(r_1-r_{i+1}) \times (n-r_{i}+1+k) }}\]
for $k=1, \dots, r_{i}-r_{i+1}-1.$
Horizontal arrows contribute a factor of the form
\[\frac{p_{(r_1-r_{i}+k) \times (n-r_{i}+1)}}{p_{(r_1-r_{i}+k-1) \times (n-r_{i})}},\]
for $k=1,\dots, r_{i}-r_{i+1}-1.$
Since 
\[s^1_{(r_1-r_{i+1}-1) \times (n-r_{i}+k) }=0, \hspace{1mm} k=1, \dots, r_{i}-r_{i+1}-1 \]
and \[s^1_{(r_1-r_{i}+k) \times (n-r_{i}+1)}=0, \hspace{1mm} k=1,\dots, r_{i}-r_{i+1}-1\]
in $\flag$ by part (b) of Theorem \ref{thm:thmB}, the extra terms vanish as claimed. 
\end {proof}

\section{Quantum hooks and quantum cohomology}\label{sec:theoremB}

In this section, we study a natural ring homomorphism from  the ring $\Lambda_{r_1}$ of symmetric polynomials in $r_1$ variables to $\QH^*\flag$ given by mapping the $k$th  elementary symmetry polynomial in $r_1$ variables $e_k(r_1)$ to the $k$th \emph{quantum elementary polynomial} $e^q_k(r_1)$,   defined by the recursion  \eqref{eq:eq-recursion} as in Section \ref{sec:background}.

We have
a $\ZZ$-basis of $\Lambda_{r_1}$ given by Schur polynomials indexed by partitions $\lambda$ of height at most $r_1$. Using the identity $s_\lambda = \det(s_{1^{\lambda'_i+j-i}})=\det(e_{\lambda'_i+j-i}(r_1))$, where $\lambda'$ is the transpose of $\lambda$, we write $s^1_\lambda$ for the image of $s_\lambda$  under the map $\Lambda_{r_1}  \to \QH^*\flag$: 
\begin{align*}
s_\lambda & \mapsto s^1_\lambda:= \det(e^q_{\lambda'_i+j-i}(r_1)).
\end{align*}
 For $\lambda\in P(n,r_1)$, $s^1_\lambda$ represents a quantum Schubert class. When $\lambda$ has width greater than $n-r_1$, $s^1_\lambda$ is still defined, and Theorem B states that  for a particular class of partitions $\lambda$, $s^1_\lambda$ is equal to a Schubert class, up to power of $q$, and that for another class of partitions, $s^1_\lambda=0$.

We begin with some terminology. For $0 < b\leq n-r_\rho$, write  $\bar{b}:=b-(n-r_I)$ for $I$ such that $n-r_I < b\leq n-r_{I+1}$. As in the introduction, set the \emph{quantum hook} (or \emph{$q$-hook})  \emph{of width $b$} to be the partition
\[
H_b := (b^{b-n+r_1},(b-n+r_I)^{n-r_{I+1}-b}) = (b^{r_1-r_I+\bar{b}},\bar{b}^{n-r_{I+1}-b}).
\]
 In the proof of our results, we will often consider the column heights of $H_b$, which we can read from the transpose of $H_b$:
\begin{equation}\label{eq:Hb-transpose}
H'_b= ((r_1-r_{I+1})^{\bar{b}},(r_1-r_I+\bar{b})^{n-r_I}) = ((r_1-r_{I+1})^{b-n+r_I},(b-n+r_I)^{n-r_I})
\end{equation}
The $q$-hook $H_b$ can also be described as the partition obtained from a $(r_1-r_I)\times (n-r_I)$ rectangle after adding $\bar{b}$ rim-hooks of length $n+r_1-r_I-r_{I+1}$, each beginning in row $r_1-r_{I+1}$ and ending in row $1$ (see Figure \ref{fig:justqhook}, also Figure \ref{fig:qhook}).

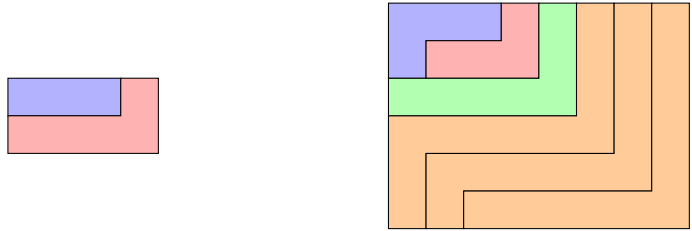
\begin{figure}[h!]
\centering
\begin{subfigure}{0.3\textwidth}
\begin{tikzpicture}[scale=.5]
\filldraw[fill=blue!30, draw=black]  (0,0) -- (3,0) -- (3,-1) -- (0,-1)-- cycle;
\filldraw[fill=red!30, draw=black] (3,0) -- (3,-1) -- (0,-1) -- (0,-2) -- (4,-2) -- (4,0) -- cycle;

\end{tikzpicture}
\end{subfigure}
\begin{subfigure}{0.3\textwidth}
\begin{tikzpicture}[scale=.5]
\filldraw[fill=blue!30, draw=black]  (0,0) -- (3,0) -- (3,-1) -- (1,-1) -- (1,-2) -- (0,-2) -- cycle;
\filldraw[fill=red!30, draw=black] (3,0) -- (3,-1) -- (1,-1) -- (1,-2) -- (4,-2) -- (4,0) -- cycle;
\filldraw[fill=green!30, draw=black] (4,0)-- (5,0) -- (5,-3) --(0,-3) --(0,-2) -- (4,-2)--cycle;
\filldraw[fill=orange!40, draw=black] (5,0) -- (6,0) -- (6,-4) -- (1,-4) --(1,-6) --(0,-6) -- (0,-3) -- (5,-3) --cycle;
\filldraw[fill=orange!40, draw=black]  (6,0)-- (7,0) -- (7,-5) -- (2,-5) -- (2,-6) -- (1,-6) -- (1,-4)--(6,-4) -- cycle;
\filldraw[fill=orange!40, draw=black] (7,0) -- (7,-5) -- (2,-5) -- (2,-6) -- (8,-6) -- (8,0) --cycle;

\end{tikzpicture}
\end{subfigure}

\caption{$q$-hooks $H_b$  of width $b=3,4$ for $\Fl(4;2,1)$ and of width $3\leq b\leq8$ for $\Fl(8;6,4,3)$. 
\label{fig:justqhook}
}

\end{figure}
For a $q$-hook $H_b$ of width $b$, set $q^{H_{b}}:=  q_1^{r_1-r_2}\cdots(q_1\cdots q_{I-1})^{r_{I-1}-r_I}(q_1\cdots q_I)^{b-(n-r_I)}.$  With this definition, note that
\begin{equation}\label{eq:qpower}
q^{H_b}=q^{H_{b-1}}\cdot q_1\cdots q_I.
\end{equation} 

\begin{eg} \label{eg:twostep}
Consider $\QH^*\Fl(4;2,1)$ with $\deg q_1=3$ and $\deg q_2=2$. For the $q$-hooks $H_3=(3,0)$ and $H_4=(4,4)$  shown in Figure \ref{fig:justqhook}, we have $q^{H_3}=q_1$ and $q^{H_4}=q_1(q_1q_2)=q_1^2q_2$.
\end{eg}
\begin{eg}  \label{eg:qhook}
Consider   $\QH^*\Fl(8;6,4,3)$ with $\deg q_1=4, \deg q_2=3$, and $\deg q_3=4$. Let $I$ be such that $n-r_I < b\leq n-r_{I+1}$. For the $q$-hooks of width $3\leq b\leq8$ (depicted in Figure \ref{fig:justqhook}),
we have:
\begin{table}[h]
\[
\begin{array}{|c|c|c|c|} \hline 
b & I & H_b & q^{H_b}\\ \hline \hline
3& 1& (3,1)   & q_1\\ \hline
4 & 1& (4,4)  & q_1^2 \\ \hline
5&2& (5,5,5)   & q_1^2(q_1q_2)=q_1^3q_2\\ \hline
6 & 3& (6,6,6,6,1,1) & q_1^2(q_1q_2)(q_1q_2q_3)= q_1^4q_2^2q_3 \\ \hline
7 & 3& (7,7,7,7,7,2) & q_1^2(q_1q_2)(q_1q_2q_3)^2= q_1^5q_2^3q_3^2 \\ \hline
8 & 3& (8,8,8,8,8,8) &q_1^2(q_1q_2)(q_1q_2q_3)^3=  q_1^6q_2^4q_3^3\\ \hline
\end{array}
\]
\end{table}
\end{eg}
Let $R_b := (b^{r_1-r_I+\bar{b}}) = (b^{r_1-(n-b)})$ be the maximal rectangle of width $b$ contained in $H_b$,  with  $H_b=R_b=\emptyset$ if $b<n-r_1$. 
\begin{rem}\label{rem:qhook}
For a partition  $\lambda\subseteq r_1\times n$,  let $I$ be such that $n-r_I < \lambda_1\leq n-r_{I+1}$. Then $R_{\lambda_1}\subseteq \lambda$ if condition (i) below holds, and $H_{\lambda_1}\subseteq \lambda$ if conditions (i) and (ii) below hold.
\begin{enumerate}
\item[(i)] $\lambda_{\lambda_1}'\geq \lambda_1-(n-r_1)$ 
\item[(ii)]$ \lambda_{\lambda_1-(n-r_I)}' \geq r_1 -r_{I+1}$.
\end{enumerate}
    (Note that if $\lambda_1=n-r_{I+1}$, then condition (ii) is redundant.)
  \end{rem}
Conditions (i) and (ii) are illustrated  in the left  diagram of Figure \ref{fig:qhook} by $\lambda$ containing the southeast corner boxes of the $q$-hook marked by $+$ and $\times$, respectively. Here, $b=\lambda_1$, $\bar{b}:=\lambda_1-(n-r_I)$, and $r_1-r_I+\bar{b}=\lambda_1-(n-r_1)$.

\begin{mydef} \label{def:compatible}A  partition  $\lambda\subseteq r_1\times n$ is \emph{compatible with a $q$-hook} if $H_{\lambda_1}\subseteq \lambda$, i.e. conditions (i) and (ii) of Remark \ref{rem:qhook} holds.
\end{mydef}

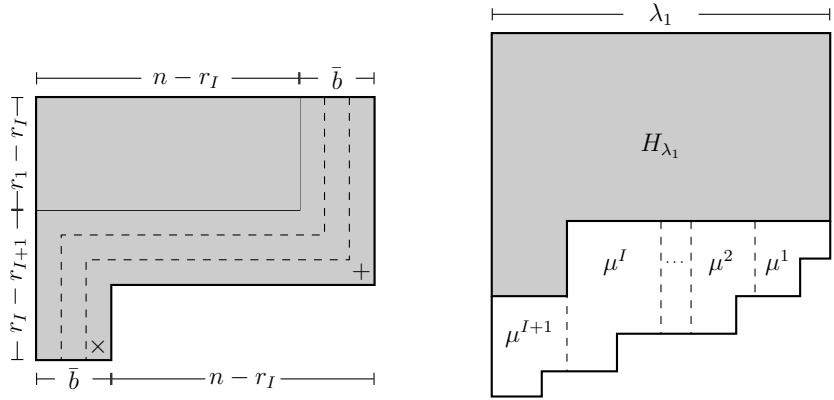
\begin{figure}[h!]

\centering
\begin{tikzpicture}[scale=.5]
\draw  (0, -3) -- (7, -3) -- (7, 0) --  (9, 0) -- (9,-5) -- (2, -5) -- (2, -7)-- (0, -7) -- cycle;
\fill[color=gray!40]    (0, -3) -- (7, -3) -- (7, 0) -- (9, 0) -- (9,-5) -- (2, -5) -- (2, -7)-- (0, -7) -- cycle;

\draw[thick] (0, 0) rectangle (7, -3);
\fill[color=gray!40] (0, 0) rectangle (7, -3);
\draw[thick] (6, 0) -- (9, 0) -- (9,-5) -- (2, -5) -- (2, -7) -- (0, -7)--(0,-3) -- (0,0) -- (6,0);
\draw[dashed] (7.67,0) -- (7.67,-3.67) -- (.67,-3.67) -- (.67,-7);
\draw[dashed] (8.33,0) -- (8.33,-4.33) -- (1.33,-4.33) -- (1.33,-7);

\node[scale=.8] at (1.65, -6.65) {$\times$};
\node[scale=.8] at (8.65, -4.65) {$+$};

\node[scale=.8] at (4, .5) {$n-r_I$};
\draw [-|] (5.2, .5) -- (7, .5);
\draw [|-] (0, .5) -- (2.8, .5);

\node[scale=.8] at (8, .5) {$\bar{b}$};
\draw [-|] (8.5, .5) -- (9, .5);
\draw [|-] (7, .5) -- (7.5, .5);

\node[scale=.8, rotate = 90] at (-.5, -1.5) {$r_1-r_I$};
\draw [-|] (-.5, -2.5) -- (-.5, -3);
\draw [|-] (-.5, 0) -- (-.5, -.5);

\node[scale=.8, rotate = 90] at (-.5, -5) {$r_I-r_{I+1}$};
\draw [-|] (-.5, -6.5) -- (-.5, -7);
\draw [|-] (-.5, -3) -- (-.5, -3.5);

\node[scale=.8] at (1, -7.5) {$\bar{b}$};
\draw [-|] (1.5, -7.5) -- (2, -7.5);
\draw [|-] (0, -7.5) -- (.5, -7.5);

\node[scale=.8] at (5.5, -7.5) {$n-r_I$};
\draw [-|] (6.5, -7.5) -- (9, -7.5);
\draw [|-] (2, -7.5) -- (4.5, -7.5);

\end{tikzpicture}
\hspace{.5in}
\begin{tikzpicture}[scale=.5]
\fill[color=gray!40]   (0,0) --(9, 0) -- (9,-5) -- (2, -5) -- (2, -7)-- (0, -7) -- cycle;

\draw[thick] (6, 0) -- (9, 0) -- (9,-5) -- (2, -5) -- (2, -7) -- (0, -7)--(0,-3) -- (0,0) -- (6,0);

\draw[thick] (0,-7) --  (0,-9.67) -- (1.33,-9.67)-- (1.33,-9) -- (2,-9)-- (3.33,-9) -- (3.33,-8)  -- (6.5,-8)--(6.5,-7)--(8.2,-7) --  (8.2,-6)--(9,-6) --(9,-5);

\draw[dashed]  (2,-6) -- (2,-9);
\draw[dashed]  (4.5,-5) -- (4.5,-8);
\node[scale=.5] at (4.9, -6.1) {$\cdots$};
\draw[dashed]  (5.3,-5) -- (5.3,-8);
\draw[dashed] (7,-5) -- (7,-7);
\node[scale=.8]  at (1, -8) {$\mu^{I+1}$};

\node[scale=.8]  at (3.3,-6.1) {$\mu^I$};

\node[scale=.8] at (6.1,-6.1) {$\mu^2$};
\node[scale=.8] at (7.6,-6.1) {$\mu^1$};

\node[scale=.8] at (4.5,-3) {$H_{\lambda_1}$};

\node[scale=.8] at (4.5, .5) {$\lambda_1$};
\draw [-|] (5.5, .5) -- (9, .5);
\draw [|-] (0, .5) -- (3.5, .5);

\end{tikzpicture}
\caption{A $q$-hook $H_b$  of width $b$ and a skew shape $\lambda/H_{\lambda_1}$ with associated tuple of partitions $\vec{\mu}_\lambda=(\mu^1,\ldots,\mu^{I+1},\emptyset,\ldots,\emptyset)$.  }
\label{fig:qhook}
\end{figure}

\begin{rem}\label{rem:emptyhook} The partition $H_b$ has height $r_1-r_{I+1}$. By convention, $r_0=n$, so when $0=n-r_0< b\leq n-r_1$, $H_b$ is the empty partition, and so every partition $\lambda$ of width at most $n-r_1$ is compatible with a $q$-hook.
\end{rem}

 For  a partition $\lambda\subseteq r_1\times n$ that is compatible with a $q$-hook,  define partitions $\mu^1,\ldots,\mu^I$   by subdividing the skew shape $\lambda/H_{\lambda_1}$, where $\mu^1$ is the partition consisting of the rightmost $n-r_1$ columns of $H_{\lambda_1}$, $\mu^2$ is the partition consisting of the second rightmost $r_1-r_2$ columns of $H_{\lambda_1}$, etc. If $I<\rho$, let  $\mu^{I+1}$ be the partition consisting of the leftmost $\bar{b}$ columns.  (See Figure \ref{fig:qhook}.)

\begin{mydef} \label{def:mu-lambda} 
For  a partition $\lambda\subseteq r_1\times n$ that is compatible with a $q$-hook,  define \emph{the tuple of partitions associated to $\lambda/H_{\lambda_1}$} to be $\vec{\mu}_\lambda= (\mu^1,\ldots,\mu^{I+1},\emptyset,\ldots,\emptyset)$ if $I<\rho$ and  $\vec{\mu}_\lambda= (\mu^1,\ldots,\mu^{I})$ if $I=\rho$, as described above (see Figures \ref{fig:qhook} and \ref{fig:eg}).  Here, $\vec{\mu}_\lambda\in P(n,\mathbf{r})$ since $\mu^\ell\subseteq {r_\ell\times (r_{\ell-1}- r_\ell)}$ for $1\leq l\leq \rho$. \end{mydef}

\begin{lem}\label{lem:bijections} For  a partition $\lambda$ that is compatible with a $q$-hook, let $w$ be the (321-avoiding) permutation
corresponding to $(\lambda/H_{\lambda_1},\omega)$ with labeling $\omega(i,j)=r_1+i-j$ under the bijection in \cite{bjs}.  Then $w$ is equal to the permutation corresponding to the tuple $\vec{\mu}_\lambda$ via the bijection described in Remark \ref{rem:bijection}. Moreover,  $w$  is either Grassmannian with descent at $r_{I+1}$ or has descents at exactly $r_I$ and $r_{I+1}$, where  $I$ is such that $n-r_{I}<\lambda_1\leq n-r_{I+1}$.
\end{lem}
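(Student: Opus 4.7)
The plan is to prove the lemma by computing $w := w_{\vec{\mu}_\lambda}$ explicitly in one-line notation and identifying it with the BJS permutation of $(\lambda/H_{\lambda_1}, \omega)$. An element of $S(n,\mathbf{r})$ is determined by its image on each window $[r_{\ell+1}+1, r_\ell]$ (an increasing sequence). Writing $w = w^1 w^2 \cdots w^{I+1}$ with the remaining $w^\ell = \mathrm{id}$, each $w^\ell$ is the Grassmannian permutation determined by $\mu^\ell$ via $\mu^\ell_k = w^\ell(r_\ell + 1 - k) - (r_\ell + 1 - k)$. I will compose these inductively from $\ell = I+1$ down to $\ell = 1$, using that each $\mu^\ell$ is cut from the column block of $\lambda/H_{\lambda_1}$ of width $r_{\ell-1} - r_\ell$ (or $\bar{b}$ for $\ell = I+1$). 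Each window image $w([r_{\ell+1}+1, r_\ell])$ can then be read off directly from the column heights $\lambda'_j$ in the corresponding column range of $\lambda$.

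From this block description, the descent structure follows: a descent at $r_\ell$ of $w \in S(n,\mathbf{r})$ is equivalent to $\max w([r_{\ell+1}+1, r_\ell]) > \min w([r_\ell+1, r_{\ell-1}])$. Since the column heights of $H_{\lambda_1}$ make a single jump — from height $r_1 + \lambda_1 - n$ on the $n - r_I$ short right columns to height $r_1 - r_{I+1}$ on the $\bar{b}$ tall left columns — the images $w([r_{\ell+1}+1, r_\ell])$ interleave monotonically across most boundaries, so descents can only appear at the two positions where this jump interacts with the block decomposition, namely $r_I$ and $r_{I+1}$. Whether each of these is an actual descent (giving the Grassmannian case or the two-descent case) is determined by a direct inequality check between the maxima and minima of the adjacent blocks, which reduces to the interplay between $\mu^I$ and $\mu^{I+1}$.

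To match with the BJS bijection I will invoke the determinantal formula \eqref{eq:schub-det}: the pair $(\lambda/H_{\lambda_1}, \omega)$ determines a 321-avoiding permutation whose Schubert polynomial is the flagged skew Schur polynomial of this shape with flag $f_j = r_1 + j - \lambda'_j$. I verify it equals $\mathfrak{S}_w$ by constructing a reduced word of $w$ directly from the cells of $\lambda/H_{\lambda_1}$ — each cell $(i,j)$ contributing $s_{r_1 + i - j}$, read in a column-by-column order — and checking that this reduced word assembles into concatenated standard reduced decompositions of $w^{I+1}, w^I, \ldots, w^1$ via the column blocks defining each $\mu^\ell$. The main obstacle will be the bookkeeping in the composition step: the $w^\ell$'s overlap on $[1, r_1]$ and adjacent factors can merge or cancel descents, but the $q$-hook compatibility of Definition~\ref{def:compatible} precisely ensures a single jump in the column heights of $H_{\lambda_1}$, keeping the combinatorics sharp and confining the descents of $w$ to $\{r_I, r_{I+1}\}$.
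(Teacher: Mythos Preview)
Your approach is correct and will work, but it is considerably more laborious than the paper's argument, and the extra work does not buy anything. The reduced-word part of your plan---reading simple transpositions $s_{r_1+i-j}$ column by column from the skew shape and observing that the column blocks give exactly the reduced words of the $w^\ell$---is precisely what the paper does, citing \cite{bjs} directly. So on that half you and the paper agree.

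The difference is in the descent analysis. You propose to compute $w$ in one-line notation window by window and then compare $\max w([r_{\ell+1}+1,r_\ell])$ with $\min w([r_\ell+1,r_{\ell-1}])$ across every boundary $r_\ell$, using the single jump in the column heights of $H_{\lambda_1}$ to argue that all but two of these comparisons go the right way. This is sound but involves a fair amount of bookkeeping. The paper avoids it entirely with one observation: because the $q$-hook has constant height $r_1-r_I+\bar b$ on its rightmost $n-r_I$ columns, the concatenation $\mu^{[I]}$ of $\mu^I,\ldots,\mu^1$ is itself a partition in $P(n,r_I)$, and the product $w^{[I]}:=w^1\cdots w^I$ is exactly the Grassmannian permutation (descent at $r_I$) attached to $\mu^{[I]}$. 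Then $w=w^{[I]}\cdot w^{I+1}$ is a product of two Grassmannian permutations with descents at $r_I$ and $r_{I+1}$, and since $w^{I+1}$ fixes $[r_I+1,n]$ pointwise while $w^{[I]}$ is increasing on $[1,r_I]$, the composite can only have descents in $\{r_I,r_{I+1}\}$. This replaces your window-by-window inequality chase with a two-line argument. If you adopt this grouping, you can drop the explicit one-line computation and the inductive composition from $\ell=I+1$ down to $\ell=1$ altogether.
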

\begin{proof} A reduced expression for the (321-avoiding) permutation $w$ corresponding to $(\lambda/H_{\lambda_1},\omega)$ is given by \cite{bjs} as the product of simple transpositions obtained from reading the labeling from bottom to top, beginning with the rightmost column.
This product respects the subdivision of $\lambda/H_{\lambda_1}$ into the tuple of labeled partitions $\mu^1,\ldots,\mu^I,\mu^{I+1}$ with labeling $\omega^\ell(i,j) = r_\ell + i - j$ for $1\leq \ell\leq I+1$. (From Definition \ref{def:mu-lambda}, if $I=\rho$, then the tuple consists of only $\mu^1,\ldots,\mu^I$.)  Again, by \cite{bjs} (see also \cite{kmy}), a reduced word for $\mu^\ell$ is the product of simple transpositions obtained by reading the labeling of $\mu^\ell$ from bottom to top, beginning with the rightmost column. Concatenating these expressions recovers $w$. Moreover, define the partition $\mu^{[I]}$ to be the partition obtained by appending the partitions $\mu^I,\ldots,\mu^1$ together; this consists of the last $n-r_I$ columns of $\lambda/H_{\lambda_1}$. Let $w^{I+1}$ and $w^{[I]}:=w^1\cdots w^I$ be the Grassmannian permutations associated to $\mu^{I+1}$ and $\mu^{[I]}$; these have possible descents at $r_{I+1}$ and $r_{I}$, respectively, and so their product has possible descents at only $r_{I+1}$ and $r_{I}$.

\end{proof}
\begin{rem} \label{rem:lambda-perm} For a partition $\lambda$ that is compatible with a $q$-hook with
corresponding tuple $\vec{\mu}_\lambda$ and permutation $w$, we denote the associated Schubert class by $\sigma_{\vec{\mu}_\lambda}$, $\sigma_{w}$, or simply  $\sigma_{\lambda/H_{\lambda_1}}$. \end{rem}

\begin{eg} \label{eg:twostep-cont}
Consider $\Fl(4;2,1)$ as in Example \ref{eg:twostep}. The partition $(3,3)$ is compatible with the $q$-hook $H_3=(3,0)$. 
\[
\begin{tikzpicture}[scale=.5]
\filldraw[fill=gray!40, draw=black]  (0,0) -- (3,0) -- (3,-1) -- (0,-1)-- cycle;
\draw (0,-1) rectangle (3,-2);
\draw[dashed] (1,-1)--(1,-2);
\end{tikzpicture}
\]
The associated tuple of partitions $(\yng(2),\yng(1))$ is read from right to left from the skew shape $(3,3)/H_3$.
\end{eg}
\begin{eg} \label{eg:qhook-cont}Consider $\Fl(8;6,4,3)$ as in Example \ref{eg:qhook} and partitions 
$\eta=(3,3,3,2), \lambda=(5,5,5,5,5,4,2)$ and $\nu= (6,6,6,6,5,3)$. Then $\eta$ is compatible with the $q$-hook $H_3=(3,1)$, $\lambda$ is compatible with the $q$-hook $H_5= (5,5,5)$ and $\nu$ is compatible with the $q$-hook $H_6=(6,6,6,6,1,1)$. The associated tuples of partitions to $\eta/H_3$, $\lambda/H_5$ and $\nu/H_6$ are 
$\vec{\mu}_\eta=(\yng(2,1),\yng(1,1),\emptyset)$, $\vec{\mu}_\lambda=\left( \yng(2,1),\yng(2,2,1),\yng(1,1,1)\right)$ and $\vec{\mu}_\nu=\left(\yng(1),\yng(2,1),\yng(1,1)\right) $, as seen in Figure \ref{fig:eg} by reading the associated tuple of partitions from right to left. Note that $I=\rho=3$ in Definition \ref{def:mu-lambda} for $\nu$ since $n-r_3=5<\nu_1$. Also note  that as in Lemma \ref{lem:bijections} and Example \ref{eg:permutation}, the first permutation has descents at $r_1=6$ and $r_2=4$ and the other two permutations are Grassmannian with descent at $r_3=3$.
\end{eg}

\begin{figure}[h!]

\centering
\begin{tikzpicture}[scale=.5]
\filldraw[fill=gray!40, draw=black]  (0,0) -- (3,0) -- (3,-1) -- (1,-1) -- (1,-2) -- (0,-2) -- cycle;
\draw  (3,-1) --  (3,-2) -- (2,-2) -- (2,-3) -- (1,-3) -- (1,-4) --(0,-4)-- (0,-2);
\draw[dashed] (1,-2) -- (1,-4);
\node[scale=.8]  at (1.5, -.5) {$H_3$};
\end{tikzpicture}
\hspace{.5in}
\begin{tikzpicture}[scale=.5]

\filldraw[fill=gray!40, draw=black] (0,0) rectangle (5,-3);
\draw  (5,-3) -- (5,-4) -- (4,-4) -- (4,-5) --(2,-5) --(2,-6) -- (0,-6) -- (0,-3) --cycle;
\draw[dashed] (3,-3) -- (3,-5);
\draw[dashed] (1,-3) -- (1,-6);
\node[scale=.8]  at (2.5, -1.5) {$H_5$};
\end{tikzpicture}
\hspace{.5in}
\begin{tikzpicture}[scale=.5]
\filldraw[fill=gray!40, draw=black] (0,0) -- (6,0) -- (6,-4) -- (1,-4) --(1,-6) --(0,-6) -- (0,-3) --cycle;
\draw   (5,-4)-- (5,-5) -- (3,-5) -- (3,-6) -- (1,-6)--(1,-4) --cycle;
 \draw[dashed] (4,-4) -- (4,-5);
\draw[dashed] (2,-4) -- (2,-6);
\node[scale=.8]  at (3, -2) {$H_6$};

\end{tikzpicture}

\caption{Partitions $\eta , \lambda$, and $\nu$, skew shapes $\eta/H_3$, $\lambda/H_5$ and $\nu/H_6,$ and their associated tuples of partitions $\vec{\mu}_\eta, \vec{\mu}_\lambda$ and $\vec{\mu}_\nu$ for $\Fl(8;6,4,3)$.}
\label{fig:eg}

\end{figure}
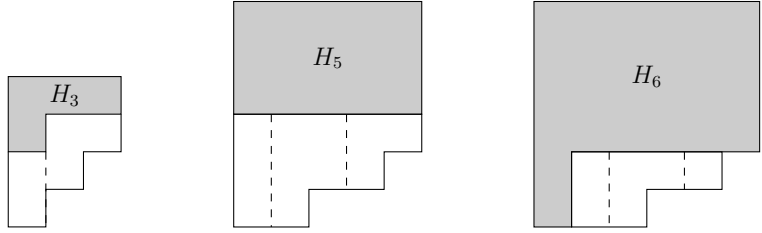

Before proving Theorem \ref{thm:thmB}, we introduce and study the following auxiliary partitions.
\begin{mydef} \label{def:qlambda}
Given a partition $\lambda\subseteq r_1\times n$ and $1\leq m\leq \lambda_1$, define $\lambda^{(m)}$ to be the partition  obtained from $\lambda$ by removing column $m$ from $\lambda$ and adding 1 to columns $1,\ldots,m-1$, i.e.
\[(\lambda^{(m)})' =(\lambda_1'+1,\cdots, \lambda_{m-1}'+1,{\lambda}_{m+1}'+1,\cdots,\lambda_{\lambda_1}'),
\]
where $\lambda'$ is the transpose of $\lambda$, i.e.
$(\lambda^{(m)})'_i  = \lambda_i'+1$ for $i<m$ and $(\lambda^{(m)})'_i=\lambda_{i+1}$ for $i\geq m$. (See Figure \ref{fig:lambdamhook}.)

\end{mydef}

\begin{figure}[h!]

\begin{tikzpicture}[scale=.6]

\node[scale=.7] at (4.5, .5) {$\lambda_1$};
\draw [-|] (5, .5) -- (9, .5);
\draw [|-] (0, .5) -- (4, .5);

\draw[thick] (9,-5) -- (2, -5) -- (2, -7)-- (0, -7) -- (0,-9.67) -- (.67,-9.67) -- (.67,-9)-- (2,-9)-- (3.33,-9) -- (3.33,-7.67) -- (6.5,-7.67)--(6.5,-7)--(8.2,-7) --  (8.2,-6)--(9,-6) -- cycle;

\fill[color=orange!40]    (0, 0)--(9,0) -- (9,-4.33) -- (1.33,-4.33) -- (1.33,-7)-- (0, -7) -- cycle;

\node[scale=.8] at (1.65, -6.65) {$\times$};
\node[scale=.8] at (8.65, -4.65) {$+$};
\node[scale=.8] at (8.65, -4) {$\oplus$};
\node[scale=.8] at (1, -6 ) {$\otimes$};

\draw[thick] (6, 0) -- (9, 0) -- (9,-5) -- (2, -5) -- (2, -7) -- (0, -7)--(0,-3) -- (0,0) -- (6,0);

\draw    (1.33,-7)  --  (1.33,-4.33)  -- (9,-4.33 ) ;

\draw[pattern=north west lines] (4.67,0) rectangle (5.33,-7.67);
\node[scale=.7] at (7.7,-8.5) {{$m$th column of $\lambda$}};
\draw[->] (5.7,-8.5) to [out=20,in=10, bend left, out looseness=1]  (5 ,-7.8);

\draw[fill=red!30] (0,-9.67) rectangle  (.67,-10.33);
\draw[fill=red!30] (.67,-9) rectangle  (3.33,-9.67);
\draw[fill=red!30] (3.33,-7.67) rectangle  (4.67,-8.33);

\end{tikzpicture}
\caption{The skew shape $\lambda/H_{\lambda_1}$ and the skew shape $\lambda^{(m)}/H_{\lambda_1-1}$.  }
\label{fig:lambdamhook}
\end{figure}
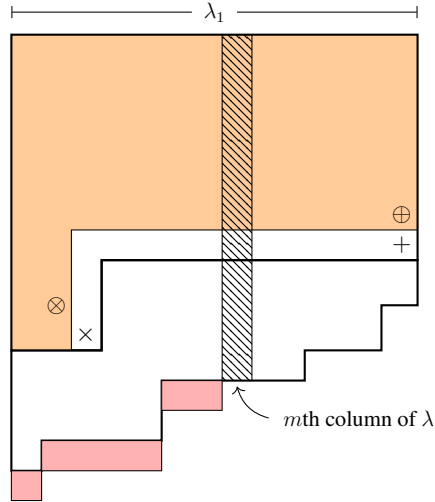

\begin{lem}\label{lem:lambdamcompatible}
If a partition $\lambda\subseteq r_1\times n$ is compatible with a $q$-hook, then $\lambda^{(m)}$ is compatible with a $q$-hook for $1\leq m\leq \lambda_1$.
\end{lem}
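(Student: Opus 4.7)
The plan is to verify directly the two inequalities of Remark~\ref{rem:qhook} that characterize compatibility with a $q$-hook, applied now to $\lambda^{(m)}$. Since removing a column decreases the width by one, $\lambda^{(m)}$ has $\lambda_1-1$ columns, so what must be shown is $H_{\lambda_1-1}\subseteq\lambda^{(m)}$. The first thing I would record is the column-height formula implicit in Definition~\ref{def:qlambda}:
\[
(\lambda^{(m)})'_i = \begin{cases} \lambda'_i + 1 & \text{if } i < m, \\ \lambda'_{i+1} & \text{if } i \geq m,\end{cases}
\]
so comparing column heights of $\lambda^{(m)}$ with those of $\lambda$ reduces to the monotonicity of $\lambda'$ together with an index shift of at most one.

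Next I would split into two cases according to how $\lambda_1-1$ sits relative to the breakpoints $n-r_i$. In the generic case $\lambda_1 > n-r_I+1$, the index associated to $\lambda_1-1$ is still $I$, and $\bar{b}'=\bar{b}-1$. Conditions (i) and (ii) for $\lambda^{(m)}$ then ask for the column-height inequalities from (i) and (ii) for $\lambda$ but with the index shifted down by one. Both are handled by sub-casing on $m\leq\lambda_1-1$ versus $m=\lambda_1$ for (i), and on $m\leq\bar{b}-1$ versus $m>\bar{b}-1$ for (ii); in each sub-case the column-height formula reduces the desired inequality to the original hypothesis plus monotonicity.

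The boundary case is $\lambda_1 = n-r_I+1$, where $\lambda_1-1 = n-r_I$ and the governing index drops to $I-1$. Here $H_{\lambda_1-1}$ degenerates to the rectangle of shape $(n-r_I)^{r_1-r_I}$, so only one inequality must be checked, namely $(\lambda^{(m)})'_{n-r_I}\geq r_1-r_I$. The point is that condition (i) on $\lambda$ forces $\lambda'_{\lambda_1}\geq r_1-r_I+1$, and combining this with the column-height formula and the monotonicity of $\lambda'$ yields the needed bound with room to spare in both sub-cases $m\leq n-r_I$ and $m=\lambda_1$.

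The main obstacle is purely bookkeeping: enumerating the sub-cases for $m$ relative to $\lambda_1$ and $\bar{b}$, and handling the regime change at $\lambda_1=n-r_I+1$ correctly (including the degenerate corners where $I=0$ or $I=\rho$ or $\lambda_1=n-r_{I+1}$, in which one or both of (i) and (ii) become vacuous). No new idea is required --- once the cases are tabulated, each reduces to comparing two terms in a weakly decreasing sequence of column heights at indices differing by at most one.
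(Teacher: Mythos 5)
Your proposal is correct and takes essentially the same approach as the paper: the paper's proof of Lemma~\ref{lem:lambdamcompatible} is a one-line verification that conditions (i) and (ii) of Remark~\ref{rem:qhook} hold for $\lambda^{(m)}$ via the column-height formula of Definition~\ref{def:qlambda} (with the two corner cells marked $\otimes$ and $\oplus$ in Figure~\ref{fig:lambdamhook}), and your case analysis is exactly that check written out, with the boundary case $\lambda_1=n-r_I+1$ (where the governing index drops to $I-1$ and $H_{\lambda_1-1}$ degenerates to a rectangle) made explicit rather than left to the figure.
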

\begin{proof}
This follows from Remark \ref{rem:qhook} and Definition \ref{def:qlambda}, where conditions (i) and (ii) of Remark \ref{rem:qhook} for $\lambda^{(m)}$ are illustrated by $\otimes$ and $\oplus$ in Figure \ref{fig:lambdamhook}.
\end{proof}

For a partition $\lambda\subseteq r_1\times n$, consider $s^1_\lambda :=\det (s^1_{\lambda'_i+j-i})$ and the determinants $\Delta$ as in  \eqref{eq:q-determinant} and \eqref{eq:si}.

\begin{prop}
\label{prop:expand}
Given a partition $\lambda\subseteq r_1\times n$   of width $b:=\lambda_1$,   let $I$ be such that $n-r_{I}<b\leq n-r_{I+1}$ and let $\bar{b}=b-(n-r_I)$. Then 
\[
\sum_{m=1}^b (-1)^{m-1} s^1_{1^{\lambda'_m-m+1}}  * \Delta_{\lambda^{(m)}/H_{b-1}}(\overline{\psi}) 
 = q_1\cdots q_I \, \Delta_{\lambda/H_b}(e^q(\phi)) \text{ in } \QH^*\flag,\]
where $\phi=((I+1)^{\bar{b}},I^{r_{I-1}-r_I},\ldots,1^{n-r_1})$ and
$\overline{\psi}=((I+1)^{\bar{b}-1}, I^{r_{I-1}-r_I},\ldots,1^{n-r_1})$. 
\end{prop}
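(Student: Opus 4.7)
My plan is to express the LHS as a single $b \times b$ determinant $\det(M)$ via cofactor expansion along its first column, identify the RHS as $q_1 \cdots q_I \det(N)$ for a closely related matrix $N$, and then bridge $\det(M)$ and $q_1 \cdots q_I \det(N)$ modulo the ideal $I^q$ of quantum relations using \eqref{eq:eq-recursion}.

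First I would exhibit the ambient $b \times b$ matrix $M$ whose row-$i$, column-$1$ entry is $e^q_{\lambda'_i - i + 1}(r_1) = s^1_{1^{\lambda'_i - i + 1}}$, and whose $(i,j)$-entry for $j \geq 2$ is $e^q_{\lambda'_i - (H_{b-1})'_{j-1} + j - i}(r_{\overline{\psi}_{j-1}})$. Definition \ref{def:qlambda} of $\lambda^{(m)}$ is arranged so that $(\lambda^{(m)})'_i - i$ evaluates to $\lambda'_i + 1 - i$ for $i < m$ and $\lambda'_{i+1} - i = \lambda'_{i+1} + 1 - (i+1)$ for $i \geq m$; after the standard reindexing these are exactly the row-$i$ and row-$(i+1)$ entries of $M$, so the minor of $M$ obtained by deleting row $m$ and column $1$ is the matrix underlying $\Delta_{\lambda^{(m)}/H_{b-1}}(\overline{\psi})$. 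The signs $(-1)^{m-1}$ then identify the LHS as the Laplace expansion of $\det(M)$ along its first column.

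Next I would let $N$ denote the $b \times b$ matrix underlying $\Delta_{\lambda/H_b}(e^q(\phi))$, so RHS $= q_1 \cdots q_I \det(N)$. A column-by-column comparison using \eqref{eq:Hb-transpose} and an analogous computation of $H'_{b-1}$ (treating the subcases $\bar{b} > 1$ and $\bar{b} = 1$ separately) shows that columns $2, \ldots, \bar{b}$ of $M$ and $N$ agree exactly; columns $\bar{b}+1, \ldots, b$ of $M$ share the flagging of $N$ but carry $e^q$-indices one larger; and the first columns are entirely different, with $M$ using $e^q_\bullet(r_1)$ at shift zero and $N$ using $e^q_\bullet(r_{I+1})$ at shift $r_1 - r_{I+1}$. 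The heart of the argument is then to show $\det(M) \equiv q_1 \cdots q_I \det(N) \pmod{I^q}$. The relations $e^q_a(n) = 0$, combined with \eqref{eq:eq-recursion} at $l = 1$, give the substitution
\[ e^q_a(r_1) \equiv (-1)^{n-r_1} q_1\, e^q_{a-(n-r_2)}(r_2) - \sum_{m=1}^{n-r_1} \sigma_m^1\, e^q_{a-m}(r_1) \pmod{I^q}. \]
Applying this entrywise to the first column of $M$ and invoking multilinearity of the determinant produces a principal term $(-1)^{n-r_1} q_1 \det(M_1)$, where $M_1$ has first column $e^q_\bullet(r_2)$, together with correction determinants whose first columns contain lower-index $e^q_\bullet(r_1)$-entries. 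I would then show that these correction determinants vanish or can be absorbed as column operations against the $e^q_\bullet(r_1)$-columns further to the right in $M$, whose index shifts are dictated by $\overline{\psi}$ and simultaneously resolve the index-one discrepancy between the later columns of $M$ and $N$. Iterating — at each step passing the first-column flagging from $r_l$ to $r_{l+1}$ via the appropriate identity from \eqref{eq:eq-recursion} and absorbing corrections using columns at flagging $r_l$ — would pick up the factors $q_2, \ldots, q_I$ in succession until the first column becomes $e^q_\bullet(r_{I+1})$ with the correct shift, completing the identification.

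The main obstacle is controlling the cascade of correction terms across the $I$ iterations: each application of \eqref{eq:eq-recursion} spawns many $\sigma$-weighted summands, and a careful induction (most naturally on $I$ or on $\bar{b}$) is required to verify that they telescope cleanly into column operations reconciling both the change of $r_l$-flagging in the first column and the index-one shifts in columns $\bar{b}+1, \ldots, b$. Lemma \ref{lem:lambdamcompatible}, ensuring intermediate partitions $\lambda^{(m)}$ remain compatible with $q$-hooks, together with the transpose description \eqref{eq:Hb-transpose} of $H_b$, should provide the combinatorial bookkeeping needed to execute this absorption unambiguously.
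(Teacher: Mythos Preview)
Your reduction of the LHS to a single $b\times b$ determinant via cofactor expansion along the first column is exactly the paper's opening move; up through writing the LHS as $\det(M)$ with first column $e^q_{\lambda'_i-i+1}(r_1)$ and remaining columns governed by $(H_{b-1})',\overline{\psi}$, you agree with the paper (this is the paper's \eqref{eq:intermediate}).

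The divergence is in the bridging step, and here there is both a genuine difference in route and a gap in your argument. You propose to iterate the recursion \eqref{eq:eq-recursion} on the first column $I$ times, at each stage absorbing the $\sigma^l_m$--weighted corrections into later columns. The paper does \emph{not} iterate. Instead it applies a single column permutation $\tau\in S_b$ to the matrix $M$ (your matrix, their \eqref{eq:intermediate}), producing a reordered matrix $[\overline{v}_1,\dots,\overline{v}_b]$. The point of $\tau$ is that after reordering, the columns $\overline{v}_j$ and the columns $v_j$ of $N$ are related by a \emph{single} lower-triangular transition matrix $A+D$: one has $[\overline{v}_1,\dots,\overline{v}_b]=(A+D)[v_1,\dots,v_b]$, where $D$ is diagonal with entries $(-1)^{r_{\ell-1}-r_\ell}q_\ell$ at the positions where the flagging jumps and $1$ elsewhere, and $A$ is strictly lower-triangular with $\sigma^l_m$ entries encoding all the correction terms simultaneously. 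Taking determinants gives the factor $\prod_\ell(-1)^{r_{\ell-1}-r_\ell}q_\ell$ in one stroke; combined with $\sgn(\tau)=(-1)^{n-r_I}$ this yields $q_1\cdots q_I$.

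Your iterative scheme is morally the Gauss--Jordan factorization of this same $A+D$ into $I$ steps, so it is not wrong in principle. But you yourself flag the obstacle: the cascade of $\sigma$-corrections does not telescope column-by-column in any obvious way, and your proposal does not supply the inductive invariant that would make it go through. Concretely, after applying \eqref{eq:eq-recursion} once to pass the first column from $r_1$ to $r_2$, the resulting ``new first column'' sits at flagging $r_2$, but the columns available to absorb the next round of corrections (at flagging $r_2$) have already had their index shifts fixed by $\overline{\psi}$; matching these up across all $I$ levels simultaneously is exactly what the column permutation $\tau$ accomplishes, and without an analogue of $\tau$ your absorption step is not well-posed. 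The paper's one-shot transition matrix is the missing idea that dissolves the cascade you are worried about.
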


\begin{proof}
From  \eqref{eq:q-determinant}, we have    
\begin{align}
 \Delta_{\lambda/H_b}(\phi) &= \det\left(e^q_{\lambda'_i-(H_b)'_j+j-i}(\phi_j) \right)=: \det[v_1,\ldots,v_b]
 \label{eq:determinant}  \\
 \Delta_{\lambda^{(m)}/H_{b-1}}(\overline{\psi}) &=\det\left(e^q_{(\lambda^{(m)})'_i-(H_{b-1})'_j+j-i}(\overline{\psi}_j) \right),  
  \label{eq:mdeterminant}
  \end{align}
where $\phi$ and $\overline{\psi}$ are as in the statement of the proposition, and where we write $v_j$ for the $j$th column of the matrix in \eqref{eq:determinant}.
(Note that $\lambda$  and $\lambda^{(m)}$ need not contain $H_b$ and $H_{b-1}$, respectively.)
 Since $s^1_{1^a} = e^q_a(r_1)$ in $\QH^*\flag$, the left hand quantity of the proposition can be rewritten as 
the determinant:
\begin{equation}\label{eq:intermediate}
\det\left(e^q_{\lambda'_i-(0,(H_{b-1})')_j+j-i}( 1,\overline{\psi}_j )\right)_{i,j}
\end{equation}
where 
$(1,\overline{\psi})=(1,(I+1)^{\bar{b}-1}, I^{r_{I-1}-r_I},\ldots,1^{n-r_1})$.
We proceed by reordering the columns of this matrix and then comparing the resulting determinant to \eqref{eq:determinant}. More concretely, let $\tau\in S_b$  be the permutation defined by 
\[ \tau(j) =\left\{
\begin{array}{ll}
{\bar{b}+r_1-r_I} &\text{if } j=1 \\
j & \text{if } 2\leq j\leq \bar{b} \\
 {\bar{b}+r_{\ell+1}-r_I +1}&\text{if } j = \bar{b}+r_{\ell-1}-r_I \text{ for } 1\leq l< I\\
 1 &\text{if }  j = \bar{b}+r_{I-1}-r_I\\
{j+1} &\text{otherwise}. 
\end{array}
\right.
\]
Reordering  columns  using the permutation $\tau$, from the description of $H'_b$ and $H'_{b-1}$ in \eqref{eq:Hb-transpose},  \eqref{eq:intermediate}  is equal to ${\sgn(\tau})$ times
\begin{equation} \label{eq:otherdeterminant}
 \det\left(e^q_{\lambda'_i-\kappa_j+j-i}( \psi_j )\right)_{1\leq i,j\leq b}  =: \det[\overline{v}_1,\ldots,\overline{v}_b] ,
\end{equation}
where $\displaystyle \kappa=H'_b - (r_I-r_{I+1}){\bf e}_1-\sum_{1\leq l<I}(r_{\ell-1}-r_\ell){\bf e}_{\bar{b}+1+r_\ell-r_I}$
 and $\displaystyle\psi=\phi- {\bf e}_1 -\sum_{1\leq l<I}{\bf e}_{\bar{b}+1+r_\ell-r_I}$.
 Here, ${\bf e}_j$ denotes the sequence that is $1$ in position $j$ and $0$ elsewhere, and $\overline{v}_j$ is the $j$th column of the determinant in \eqref{eq:otherdeterminant}.
Thus, column $v_j$  of  \eqref{eq:determinant} is equal to  column $\bar{v}_j$ of \eqref{eq:otherdeterminant} except when $j=1$ or  $j=\bar{b}+1+(r_{\ell-1}-r_I)$ with $1\leq l<I$.

We rewrite  \eqref{eq:eq-recursion}  as
 \begin{equation}\label{eq:eq-recursion-l}
e^{q}_a(r_\ell) = e^{q}_a(r_{\ell-1}) -\left( \sum_{m=1}^{r_{\ell-1}-r_\ell} \sigma_m^l e^{q}_{a-m}(r_\ell) \right) + (-1)^{r_{\ell-1}-r_\ell}q_\ell \, e^{q}_{a-(r_{\ell-1}-r_{\ell+1})}(r_{\ell+1}).
\end{equation}
Note that for $l=1$,  the first term vanishes since $e^q_a(r_0)=e^q_a(n)=0$ is a relation in the quantum cohomology ring for all $a$.

We now describe the transition matrix between vectors $v_j$ and $v'_j$.
Consider the $b\times b$ matrix $A=(a_{ij})$ with entries 
\begin{equation}
\label{eq:A-matrix-defn}
a_{ij} = \begin{cases} 
-\sigma_{\bar{b}+r_{I-1}-r_I+1-i}^I&\text{ if } j=1\\
-\sigma^l_{\bar{b}+r_{\ell-1}-r_I+1-i}&\text{ if } j=\bar{b}+1+r_{\ell+1}-r_I  \text{ for } 1\leq l<I
  \\  0 & \text{ otherwise},
  \end{cases}
\end{equation}
with the convention that $\sigma_0^l=1$ and $\sigma_i^l=0$ for $i<0$ and $i>{r_{\ell-1}-r_\ell}$. Then $A$ is a lower triangular matrix with zeros along the diagonal. Let $D = (d_{ij})$ be the $b\times b$ diagonal matrix with entries 
\begin{equation}
\label{eq:D-matrix-defn}
 d_{jj} = \begin{cases} 
  (-1)^{r_{I-1}-r_{I}}q_I &\text{ if } j=1\\
 (-1)^{r_{\ell-1}-r_\ell}q_\ell &\text{ if } j=\bar{b}+1+r_{\ell+1}-r_I  \text{ for } 1\leq l<I
  \\  1  & \text{ otherwise}.
  \end{cases}
\end{equation}
With this notation, the relation between the vectors $v_j$ and $v'_j$ is given by matrix multiplication
\[ 
[\overline{v}_1,\ldots,\overline{v}_b] =(A+D) [v_1,\ldots,v_b].\]
Since $A+D$ is lower triangular with diagonal entries $d_{jj}$, $\det(A+D) = \prod_{\ell=1}^I (-1)^{r_{\ell-1}-r_\ell}q_\ell$, and so
\[ 
\det[\overline{v}_1,\ldots,\overline{v}_b] =  (-1)^{n-r_1}q_1\ldots q_I \cdot \det[v_1,\ldots,v_b] \]
and hence by \eqref{eq:intermediate} and \eqref{eq:otherdeterminant},
the left hand side of the proposition is equal to
\[  (-1)^{n-r_1+I}q_1\ldots q_I \,\sgn(\tau) \det[v_1,\ldots,v_b]. \]
Since the signature $\sgn(\tau)$of the permutation $\tau$ is $(-1)^{n-r_I}$, we conclude that \eqref{eq:otherdeterminant} is equal to $q_1\cdots q_I$ times the determinant \eqref{eq:determinant}, as needed.

\end{proof}

\section{Proof of Theorem \ref{thm:thmB}}

In this section, we use the set up and results from Section \ref{sec:theoremB}, including Proposition \ref{prop:expand}, to prove Theorem \ref{thm:thmB}, which we restate here.

\begin{theoremrepeat}
Let $\lambda\subseteq r_1\times n$ be a partition, and let $I$ be such that $n-r_I<\lambda_1\leq n-r_{I+1}$. 
\begin{enumerate}
 \item[(a)]  If $H_{\lambda_1}\subseteq \lambda$, then
\[ s^1_\lambda = q^{H_{\lambda_1}}\sigma_{\vec{\mu}} \text{ in } \QH^*\flag,
\]
 where $\vec{\mu}=(\mu^1,\ldots,\mu^{I+1},\emptyset,\ldots,\emptyset)$ is the tuple of partitions associated to $\lambda$ above.  In particular,  $s^1_{H_{b}} = q^{H_{b}}$ since  $H_{b}/H_{b}=\emptyset$, so  $\mu^j=\emptyset$ for all $j$ and $\sigma_{(\emptyset,\ldots,\emptyset)} = 1$.

 \item[(b)] If $\lambda$ contains $R_{\lambda_1}$, but $H_{\lambda_1}\not\subseteq \lambda$, then  
$$s^1_\lambda =0 \text{ in } \QH^*\flag.$$
 \end{enumerate}
 \end{theoremrepeat}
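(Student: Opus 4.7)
The plan is to prove both parts of Theorem \ref{thm:thmB} simultaneously by induction on the width $b = \lambda_1$ of $\lambda$, with Proposition \ref{prop:expand} serving as the main engine. For the base case $b \leq n - r_1$ one has $I = 0$, $H_b = \emptyset$, $q^{H_b} = 1$, and $\lambda \in P(n, r_1)$, so $s^1_\lambda$ is literally the Schubert class $\sigma_{(\lambda, \emptyset, \ldots, \emptyset)} = \sigma_{\vec{\mu}_\lambda}$, establishing (a); part (b) is vacuous because $R_b = H_b = \emptyset$.

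For the inductive step I would first expand $s^1_\lambda = \det(s^1_{1^{\lambda'_i + j - i}})$ by Laplace expansion along the first column,
\[
s^1_\lambda \;=\; \sum_{m=1}^{b} (-1)^{m-1}\, s^1_{1^{\lambda'_m - m + 1}} * s^1_{\lambda^{(m)}},
\]
where an index comparison with Definition \ref{def:qlambda} identifies the $(m,1)$ minor with $s^1_{\lambda^{(m)}}$. Each $\lambda^{(m)}$ has width $b - 1$, and I would apply the inductive hypothesis: in case (a) Lemma \ref{lem:lambdamcompatible} guarantees $\lambda^{(m)}$ is compatible with $H_{b-1}$; in case (b) a quick check shows that $R_b \subseteq \lambda$ forces $R_{b-1} \subseteq \lambda^{(m)}$ for every $m$, so either (a) or (b) applies inductively. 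In every situation one obtains
\[
s^1_{\lambda^{(m)}} = q^{H_{b-1}} \, \Delta_{\lambda^{(m)}/H_{b-1}}(\overline{\psi}),
\]
with $\overline{\psi}$ the flagging appearing in Proposition \ref{prop:expand}. When $\lambda^{(m)}$ is compatible, this uses inductive (a) combined with a direct check that $\overline{\psi}$ coincides with the flagging demanded of $\sigma_{\vec{\mu}_{\lambda^{(m)}}}$ by \eqref{eq:q-Schubert}, separately in the sub-cases $\bar b > 1$ (where the index $I$ is preserved) and $\bar b = 1$ (where it drops to $I-1$). When $\lambda^{(m)}$ is not compatible, both sides vanish -- the left by inductive (b), the right by Remark \ref{rem:skewzero} -- and the same holds in the degenerate case $(\lambda^{(m)})'_1 = r_1 + 1$, where the first row of each determinant is identically zero.

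Substituting back, invoking Proposition \ref{prop:expand}, and using the factorisation $q^{H_b} = q^{H_{b-1}} \cdot q_1 \cdots q_I$ from \eqref{eq:qpower}, I obtain
\[
s^1_\lambda \;=\; q^{H_{b-1}} \cdot q_1 \cdots q_I \cdot \Delta_{\lambda/H_b}(e^q(\phi)) \;=\; q^{H_b}\, \Delta_{\lambda/H_b}(e^q(\phi)).
\]
If $H_b \subseteq \lambda$, Lemma \ref{lem:bijections} and \eqref{eq:q-Schubert} identify $\Delta_{\lambda/H_b}(e^q(\phi))$ with $\sigma_{\vec{\mu}_\lambda}$, yielding (a); otherwise $H_b \not\subseteq \lambda$, and Remark \ref{rem:skewzero} forces $\Delta_{\lambda/H_b}(e^q(\phi)) = 0$, yielding (b). The main obstacle I expect is the combinatorial bookkeeping in the inductive step: verifying that $\overline{\psi}$ matches the flagging demanded by $\sigma_{\vec{\mu}_{\lambda^{(m)}}}$ cleanly across the boundary between the two sub-cases of $\bar b$, and checking that all non-compatible and degenerate $\lambda^{(m)}$ produce simultaneous vanishing on both sides so that Proposition \ref{prop:expand} can be applied uniformly without splitting the argument for (a) and (b).
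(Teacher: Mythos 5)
Your proposal is correct and follows essentially the same route as the paper: induction on the width $b=\lambda_1$, Laplace expansion of $s^1_\lambda$ along the first column to produce the $s^1_{\lambda^{(m)}}$, Lemma \ref{lem:lambdamcompatible} for compatibility, and Proposition \ref{prop:expand} together with Remark \ref{rem:skewzero}, \eqref{eq:qpower}, Lemma \ref{lem:bijections} and \eqref{eq:q-Schubert} to assemble the conclusion. The only organisational differences are that you run one simultaneous induction for (a) and (b), establishing the uniform identity $s^1_{\lambda^{(m)}} = q^{H_{b-1}}\Delta_{\lambda^{(m)}/H_{b-1}}(e^q(\overline{\psi}))$ for every $m$ and then invoking Proposition \ref{prop:expand} once, whereas the paper proves (a) by its own induction and then (b) by a separate induction with an explicit subcase analysis on $\bar{b}$ and the cell marked $\otimes$; your explicit handling of the height-overflow case $(\lambda^{(m)})'_1 = r_1+1$, where both determinants have identically zero first row, is a genuine edge case that the paper's write-up leaves implicit.
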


\begin{eg}\label{eg:twostep-thm}
For $\QH^*\Fl(4;2,1)$, by part (a) of the theorem and Examples \ref{eg:twostep} and \ref{eg:twostep-cont}, we have 
\begin{align*}
s^1_{\yng(2)} & = \sigma_{\yng(2),\emptyset} \\
s^1_{\yng(3)} & = q_1 \\ 
s^1_{\yng(3,3)} &= q_1\sigma_{\yng(2),\yng(1)}\\
s^1_{\yng(4,4)} & = q_1^2q_2. 
\end{align*}
(See also Example \ref{eg:twostep-quiver}.)
\end{eg}
\begin{eg} \label{eg:thm}
Consider $\QH^*\Fl(8;6,3,2)$ as in Examples \ref{eg:qhook} and \ref{eg:qhook-cont}. By part (a) of the theorem, for the partitions 
$\eta=(3,3,3,2),\lambda=(5,5,5,5,5,4,2)$ and $\nu= (6,6,6,6,5,3)$, we have
\[ s^1_\eta=q_1\sigma_{\yng(2,1),\yng(1,1),\emptyset}, \,
s^1_\lambda = q_1^3q_2 \sigma_{\yng(2,1),\yng(2,2,1),\yng(1,1,1)} \, \text{ and } \, s^1_\nu = q_1^4q_2^2q_3 \sigma_{\yng(1),\yng(2,1),\yng(1,1)}.
\]
From Remark \ref{rem:bijection} and Example \ref{eg:permutation}, we can also write this in terms of the indexing of Schubert classes by permutations  as 
\[ s^1_\eta=q_1\sigma_{12453768}, \,s^1_\lambda = q_1^3q_2 \sigma_{36812457}\, \text{ and } \, s^1_\nu = q_1^4q_2^2q_3 \sigma_{14723568 } .
\]
On the other hand, for the partition $\gamma=(6,6,6,6,3)$, we have 
$s^1_\gamma=0$ by part (b) of the theorem since $\gamma$ contains $R_6=(6,6,6,6)$ but not $H_6=(6,6,6,6,1,1)$.
\end{eg}

We now prove part (a) of Theorem \ref{thm:thmB} and then use part (a) to prove part (b).

\begin{proof}[Proof of part (a) of Theorem \ref{thm:thmB}]
We proceed by induction on the width $b:=\lambda_1$ of $\lambda$.  For the base cases, when $0<b\leq n-r_1$, by Remark \ref{rem:emptyhook}, $H_b$ is the empty partition, and we have the equality $s^1_\lambda=\sigma_{\lambda}=\sigma_{\lambda/\emptyset}$.

Now assume the result for partitions of width at most $b-1$. Given a partition $\lambda\subseteq r_1\times n$,  expanding the determinant $s^1_\lambda :=\det (s^1_{\lambda'_i+j-i})$ along the first column gives
\begin{equation} \label{eq:expansion0}
s^1_\lambda  =\sum_{m=1}^b (-1)^{m-1} s^1_{1^{\lambda'_m-m+1}} * s^1_{\lambda^{(m)}}.
\end{equation}
From Lemma \ref{lem:lambdamcompatible}, $\lambda^{(m)}$ is compatible with a $q$-hook, so
 by the induction hypothesis,
$s^1_{\lambda^{(m)} }= q^{H_{b-1}}\sigma_{\lambda^{(m)}/H_{b-1}}$, and
\eqref{eq:expansion0} becomes
\begin{align*}
 s^1_\lambda  &=\sum_{m=1}^b (-1)^{m-1} s^1_{1^{\lambda'_m-m+1}} *    q^{H_{b-1}}\sigma_{\lambda^{(m)}/H_{b-1}} \\
 &= q^{H_{b-1}}*(q_1\cdots q_I \, \sigma_{\lambda/H_b}) = q^{H_b} \, \sigma_{\lambda/H_b},
\end{align*}
where the second and third equalities follow from Proposition \ref{prop:expand},  Lemma \ref{lem:bijections},  \eqref{eq:q-Schubert}, and \eqref{eq:qpower}. 
\end{proof}

We can now prove Proposition \ref{pro:commutes}.
\begin{proof}[Proof of Proposition \ref{pro:commutes}]
Choose a vertex in the $(I+1)^{th}$ step of the ladder quiver (choosing this notation for compatibility with Theorem \ref{thm:thmB}), and let $z_v$ be the associated variable in the EHX mirror. We need to show that
\begin{equation} \label{eq:commutes} \pi(F_{\Gr}(\phi_{\Gr}(z_v)))=F_{\Fl}(\phi_{\Fl}(z_v)).\end{equation}
Suppose $v$ is in the $j^{th}$ row of the $k^{th}$ column of the $(I+1)^{th}$ block of the ladder quiver. Then
\[\phi_{\Fl}(z_v)=q_1 \dots q_{I} \frac{p^{I+1}_{j \times k}}{p^{I+1}_{(j-1) \times (k-1)}}.\]
As in the definition of the Schubert map, for $\ell=1,\dots,{I}$, let $R_\ell$ be the 
\[c \times (r_{\ell-1}-r_{\ell}), \hspace{2mm} c:=r_I-r_{I+1}+j-k\]
rectangle, and set $R_{I+1}:= j \times k$. Set $\overline{R}_{I+1}:= (j-1) \times (k-1)$.  
Set $\vec{\mu}_a:=(R_1,\dots,R_{I+1},\emptyset,\dots,\emptyset)$, $\vec{\mu}_b:=(R_1,\dots,\overline{R}_{I+1},\emptyset,\dots,\emptyset)$, and  $\vec{\mu}_2:=(R_1,\dots,R_{I},\emptyset,\emptyset,\dots,\emptyset)$. 
Then the right hand side of \eqref{eq:commutes} is 
\[q_1 \dots q_{I} \frac{\sigma_{\vec{\mu}_a}}{\sigma_{\vec{\mu}_2}}  \frac{\sigma_{\vec{\mu}_2}}{\sigma_{\vec{\mu}_b}}= q_1 \dots q_{I}\frac{\sigma_{\vec{\mu}_a}}{\sigma_{\vec{\mu}_b}}.\]
Next, we compute the left hand side of \eqref{eq:commutes}. It isn't hard to see that the vertex under consideration is in the $n-r_{I}+k$ column  and the $r_1-r_{I+1}+j$ row of the Grassmannian quiver. Let $\lambda_a=  (r_1-r_{I+1}+j) \times (n-r_{I}+k) $, and let $\lambda_b=   (r_1-r_{I+1}+j-1)\times (n-r_{I}+k-1)$. The left hand side of $\eqref{eq:commutes}$ is therefore
\[ \frac{s^1_{\lambda_a}}{s^1_{\lambda_b}}.\]
Both $\lambda_a$ and $\lambda_b$ are compatible with a $q$-hook. The partition $\lambda_a$ is compatible with the $q$-hook $H_{n-r_I+k}$. Note that we can partition $\lambda_a$ as in Figure \ref{fig:comparehook} (so, in the notation of Theorem \ref{thm:thmB}, $\overline{b}=k$):
\begin{figure}[h!]

\centering
\begin{tikzpicture}[scale=.5]
\fill[color=gray!40]   (0,0) --(9, 0) -- (9,-5) -- (2, -5) -- (2, -7)-- (0, -7) -- cycle;

\node[scale=.8] at (2,-1.5) {$R_{n-r_I+k}$};

\draw[thick] (6, 0) -- (9, 0) -- (9,-5) -- (2, -5) -- (2, -7) -- (0, -7)--(0,-3) -- (0,0) -- (6,0);
\draw (7,0)--(7,-3)--(0,-3);
\draw[thick] (0,-7) --  (0,-9) --(9,-9)--(9,-5);

\draw (2,-6) -- (2,-9);
\draw (4.67,-5) -- (4.67,-9);
\node[scale=.5] at (5, -6.1) {$\cdots$};
\draw (5.3,-5) -- (5.3,-9);
\draw (7,-5) -- (7,-9);
\node[scale=.8]  at (1, -8) {$R^{I+1}$};

\node[scale=.8]  at (3.3,-6.1) {$R^I$};

\node[scale=.8] at (6.1,-6.1) {$R^2$};
\node[scale=.8] at (7.6,-6.1) {$R^1$};

\node[scale=.6] at (1, -9.5) {$k$};
\draw [-|] (1.3, -9.5) -- (2, -9.5);
\draw [|-] (0, -9.5) -- (.7, -9.5);

\node[scale=.8] at (4.5, .5) {$n-r_I+k$};
\draw [-|] (6, .5) -- (9, .5);
\draw [|-] (0, .5) -- (3, .5);

\node[scale=.6, rotate = 90] at (-.5, -5) {$r_I-r_{I+1}$};
\draw [-|] (-.5, -6) -- (-.5, -7);
\draw [|-] (-.5, -3) -- (-.5, -4);

\node[scale=.6, rotate = 90] at (-.5, -1.5) {$r_1-r_I$};
\draw [-|] (-.5, -2.5) -- (-.5, -3);
\draw [|-] (-.5, 0) -- (-.5, -0.5);

\node[scale=.6, rotate = 90] at (-.5, -8) {$j$};
\draw [-|] (-.5, -8.5) -- (-.5, -9);
\draw [|-] (-.5, -7) -- (-.5, -7.5);

\node[scale=.6, rotate = 90] at (9.5, -2.5) {$r_1-r_I+k$};
\draw [-|] (9.5, -4) -- (9.5, -5);
\draw [|-] (9.5, 0) -- (9.5, -1);

\node[scale=.6, rotate = 90] at (9.5, -7) {$c$};
\draw [-|] (9.5, -7.5) -- (9.5, -9);
\draw [|-] (9.5, -5) -- (9.5, -6.5);

\end{tikzpicture}
\caption{The partitioning of $\lambda_a$.  }
\label{fig:comparehook}
\end{figure}
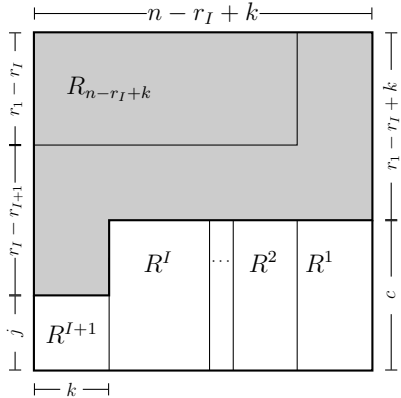
Therefore
\[ s^1_{\lambda_a}=q^{H_{n-r_I+k}} \sigma_{\vec{\mu}_1}.\]
Similarly, $\lambda_b$ is compatible with the $q$-hook $H_{n-r_I+k-1}$, and
\[ s^1_{\lambda_b}=q^{H_{n-r_I+k-1}} \sigma_{\vec{\mu}_b}.\]
It finally follows by comparing the $q$ factors that
\[ \frac{s^1_{\lambda_a}}{s^1_{\lambda_b}}=q_1 \dots q_{I}\frac{\sigma_{\vec{\mu}_a}}{\sigma_{\vec{\mu}_b}}\]
as required.

\end{proof}

We now prove the second part of Theorem \ref{thm:thmB}, that for $\lambda\subseteq r_1\times n$ such that $R_{\lambda_1}\subseteq \lambda$, but $H_{\lambda_1}\not\subseteq \lambda$, $s^1_\lambda =0$ in $\QH^*F$.

\begin{proof}[Proof of part (b) of Theorem \ref{thm:thmB}]
We proceed by induction on the width of $\lambda$. If $\lambda_1\leq n-r_1$, then $\lambda$ is compatible with a $q$-hook since $H_{\lambda_1}=\emptyset$, so the proposition holds vacuously.  
Now assume the result holds for partitions of width $b-1$, and consider a partition $\lambda$ of width $b:=\lambda_1$ that contains $R_b$ but not $H_b$. Let $I$ be such that $n-r_I<\lambda_1\leq n-r_{I+1}$.

We use the expansion  \eqref{eq:expansion0} of the determinant $s^1_\lambda$.   From Remark \ref{rem:qhook} and Definition \ref{def:qlambda}, $\lambda^{(m)}$ contains $R_{b-1}$ for $1\leq m\leq \lambda_1$. If $\lambda_1=n-r_{I+1}$, then $H_b=R_b$ and  there is nothing to prove, so assume $n-r_I<\lambda_1< n-r_{I+1}$ and write $\bar{b}=\lambda_1-(n-r_I)$.

First consider the case where $\bar{b}>1$ and $\lambda'_{\bar{b}-1}< r_1-r_{I+1}-1$. This corresponds to the cell marked $\otimes$ in Figure \ref{fig:lambdamhook} not being contained in $\lambda$. In this case, $(\lambda^{(m)})'_{\bar{b}-1}<r_1-r_{I+1}$ for $1\leq m\leq \lambda_1$ by Remark \ref{rem:qhook} so that $\lambda^{(m)}$ does not contain $H_{b-1}$. Then by the inductive hypothesis, $s^1_{\lambda^{(m)}} =0$. Since all the summands in \eqref{eq:expansion0} are zero, $s^1_\lambda=0$.

Now if $\bar{b}>1$ and  $\lambda'_{\bar{b}-1}\geq  r_1-r_{I+1}-1$, i.e. $\lambda$ contains the cell marked $\otimes$ in Figure \ref{fig:lambdamhook}, then by Remark \ref{rem:qhook} and Definition \ref{def:qlambda}, 
 if $m<\bar{b}$, then $\lambda^{(m)}$ does not contain $H_{b-1}$, so by the inductive hypothesis, 
 $s^1_{\lambda^{(m)}}=0$. On the other hand, 
 if $m\geq \bar{b}$, then  $\lambda^{(m)}$ contains $H_{b-1}$, and so by part (a) of Theorem \ref{thm:thmB}, 
 the expansion \eqref{eq:expansion0}   becomes
\[
s^1_\lambda
 =\sum_{m=\bar{b}}^b (-1)^{m-1} s^1_{1^{\lambda'_m-m+1}} *    q^{H_{b-1}}\sigma_{\lambda^{(m)}/H_{b-1}}.
\]
Since $H_{b-1}\not\subseteq\lambda^{(m)}$ for $m<\bar{b}$, by Remark \ref{rem:skewzero} and \eqref{eq:q-Schubert}, we have
\begin{equation} \label{eq:expansionm}
s^1_\lambda
 =\sum_{m=1}^b (-1)^{m-1} s^1_{1^{\lambda'_m-m+1}} *    q^{H_{b-1}}\Delta_{\lambda^{(m)}/H_{b-1}}(e^q(\psi)),
 \end{equation} 
 where $\psi=((i+1)^{\bar{b}-1}, i^{r_{I-1}-r_I},\ldots,1^{n-r_1})$.  
 Similarly, if $\bar{b}=1$, then $H_{b-1}=R_{b-1}$ and $\lambda^{(m)}$ contains $H_{b-1}$ for all $1\leq m\leq b$, so by part (a) of Theorem \ref{thm:thmB} and \eqref{eq:q-Schubert}, we have \eqref{eq:expansionm}  as well. Moreover, by Proposition \ref{prop:expand},  \eqref{eq:expansionm} becomes
 \[ s^1_\lambda= q_1\cdots q_I \, \Delta_{\lambda/H_b}(e^q(\phi)) \text{ in } \QH^*\flag,\]
where  $\phi=((i+1)^{\bar{b}},i^{r_{I-1}-r_I},\ldots,1^{n-r_1})$. Since $H_b\not\subseteq \lambda$,  we conclude that $s^1_\lambda=q_1\cdots q_I \cdot 0=0$ by Remark \ref{rem:skewzero}.
\end{proof}

\bibliographystyle{amsplain}
\bibliography{bibliography}
\end{document}